\pdfoutput=1
\documentclass{amsart}

\usepackage{mathtools}
\usepackage{amsmath}
\usepackage{amsfonts}
\usepackage{amssymb}
\usepackage{graphicx}
\usepackage{graphicx, xcolor}              
\usepackage{amsmath}               
\usepackage{amsfonts}              
\usepackage{amsmath}
\usepackage{amsthm}  
 \usepackage{float} 
 \usepackage{MnSymbol}
 \usepackage{tikz}
 \usepackage{enumerate}
\usetikzlibrary{shapes.geometric}
\usetikzlibrary{arrows.meta,arrows}

\usepackage{caption}
\usepackage{wrapfig}
\usepackage{subcaption}
\usepackage{color}
\usepackage{hyperref}

\usepackage{mathrsfs}
\usepackage{url}
\usepackage{cite}

%
\newtheorem{note}{Note}
\newtheorem{theorem}{Theorem}

\theoremstyle{plain}

\newtheorem{corollary}{Corollary}

\newtheorem{definition}{Definition}

\newtheorem{lemma}{Lemma}

\newtheorem{proposition}{Proposition}

\numberwithin{equation}{section}

\newtheorem{remarks}{Remarks}


\begin{document}

\title{ The Clock Theorem for knotoids and linkoids}

\author{Neslihan G{\"u}g{\"u}mc{\"u}}
\author{Louis H.Kauffman}

\address{Neslihan G{\"u}g{\"u}mc{\"u}:  Department of Mathematics, Izmir Institute of Technology, G\"ulbah\c ce, Urla Izmir 35430, Turkey}
\address{Louis H.Kauffman:Department of Mathematics, Statistics and Computer
Science, University of Illinois at Chicago, 851 South Morgan St., Chicago
IL 60607-7045, U.S.A. and
}
\email{neslihangugumcu@iyte.edu.tr} \email{kauffman@math.uic.edu; loukau@gmail.com}

\maketitle
\begin{abstract}
In this paper, we generalize the \textit{Clock Theorem} of Formal Knot Theory to knotoids in $S^2$. The clock theorem implies that clock states of a knotoid diagram form a lattice under transpositions.  These states form the basis of many invariants of knotoids and linkoids including the Alexander polynomial, Mock Alexander polynomial and the Jones polynomial. \end{abstract}

\section{Introduction}

In this paper we generalize the Clock Theorem in Formal Knot Theory \cite{FKT} from classical knot and link diagrams to knotoids. This generalization applies to the structure of Mock Alexander polynomials  \cite{MAP1, MAP2} and their generalizations. In this work, the knotoid polynomials are obtained by state summations (the clock states)  where each state of the diagram contributes a monomial to a sum over states that is equal to the full polynomial. Figure \ref{fig:intro} illustrates all clock states of a knotoid diagram $K$. Note from Figure \ref{fig:intro} that each state has one marker in each of the unstarred regions of the diagram where the unstarred regions are in one-to-one correspondence with the crossings of the diagram.

The contribution of a clock state to the Mock Alexander polynomial is the product of the weights assigned on the local regions where state markers coincide with the placements of the vertex weights. In this example we  see the contribution of each clock state and find that the Mock Alexander polynomial of $K$,  $\nabla_{K} (W) = W^2 - W^{-1} + W$.  For more information about this method, the reader can consult the book and papers mentioned above.
   \begin{figure}[H]
\centering
\includegraphics[width=.75\textwidth]{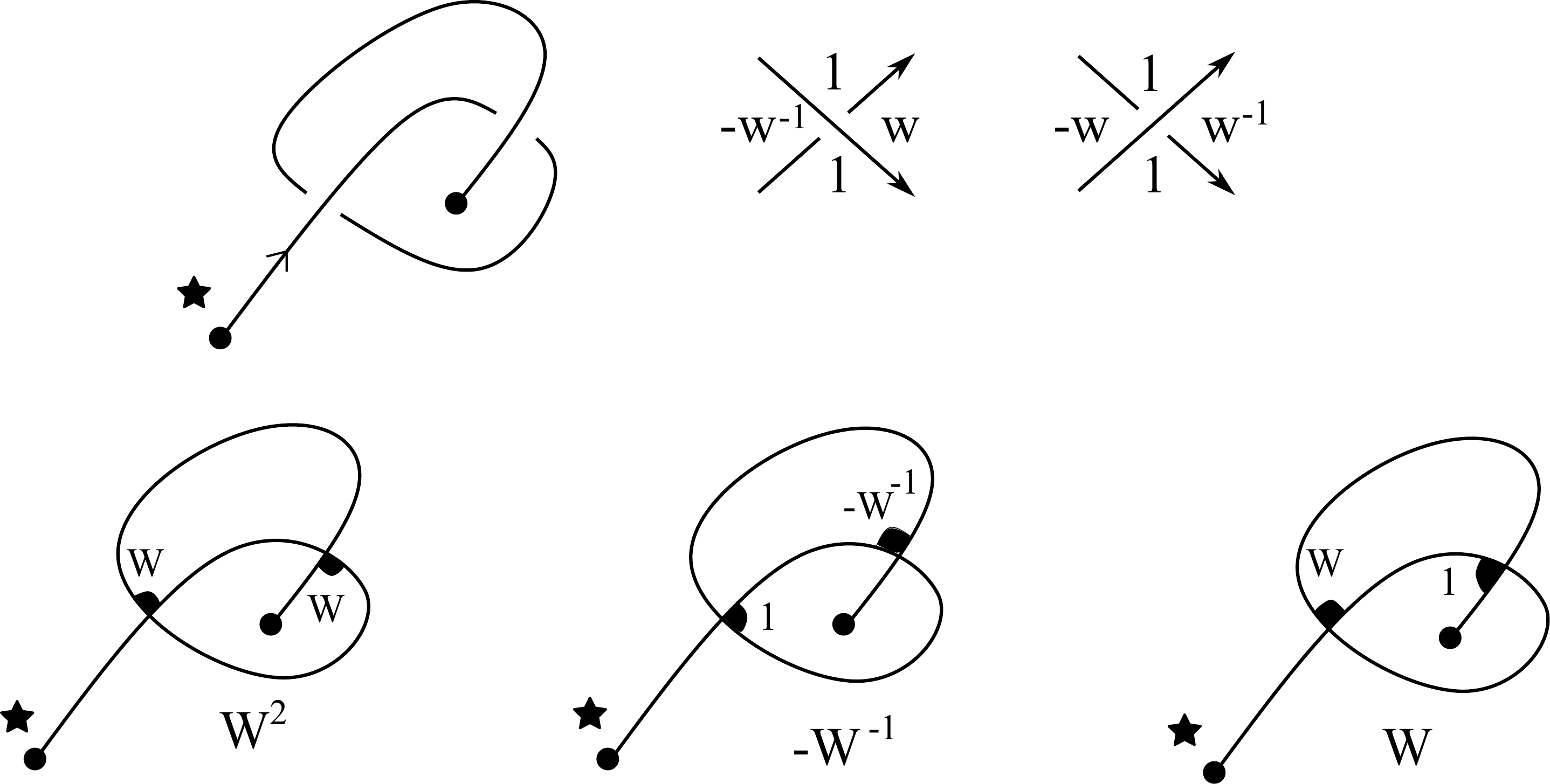}
\caption{Clock states of a starred knotoid diagram and their contributions.}
\label{fig:intro}
\end{figure}

Originally the Clock Theorem enabled a proof that the state summation model given in \cite{FKT} yields the Alexander-Conway polynomial. Our generalization of the Clock Theorem to knotoids has a primarily structural relationship to the Mock Alexander polynomial. The generalized Clock Theorem is not needed as a logical underpinning for the development of the Mock Alexander polynomials, but it can be used to prove results about them.
Our primary goal in this paper is prove the generalized Clock Theorem and we give applications of it at the end of the paper.

The generalized Clock Theorem shows that the states underlying the Mock Alexander polynomial have a lattice structure and that any two states are related by local moves that we call \textit{clock moves}. Each local move has a rotational sense in the plane and so are designated as clockwise and counter-clockwise moves. See Figure \ref{fig:clockmovesintro}.

   \begin{figure}[H]
\centering
\includegraphics[width=.75\textwidth]{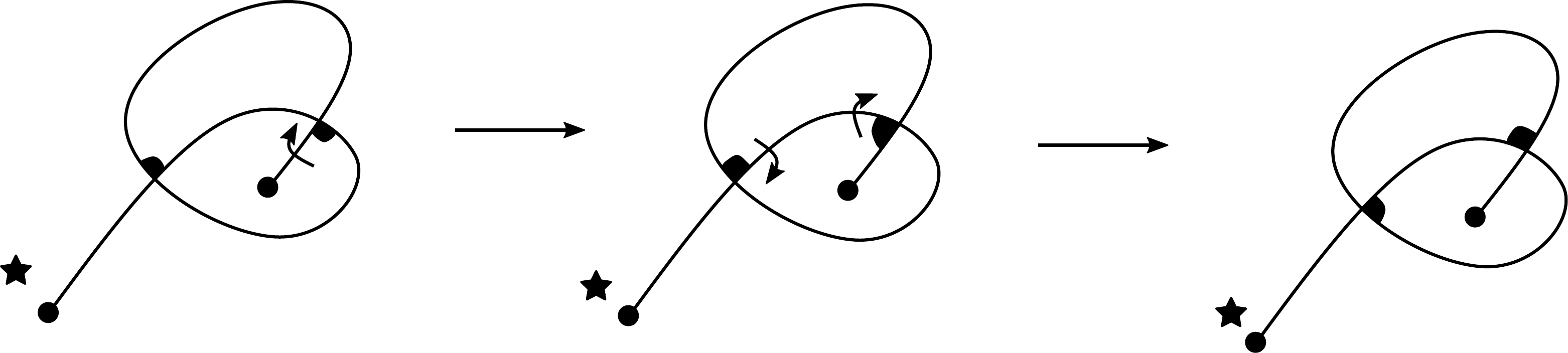}
\caption{Clock states of $K$ related to each other by clock moves.}
\label{fig:clockmovesintro}
\end{figure}

 A state that has only clockwise moves is said to be a \textit{clocked state} and a state that has only counterclockwise moves is said to be \textit{counter-clocked}. In this paper  we prove the Clock Theorem for knotoids: The clocked and counter-clocked states for a knotoid diagram exist uniquely and that they constitute the top and the bottom of the lattice of states. Furthermore, we prove that the states of a knotoid diagram are in one-to-one correspondence with the set of Euler-Jordan trails from the initial point of the knotoid to its endpoint. These trails correspond to walks on the knotoid diagram that comprise all the edges of the shadow graph of the knotoid and such that the walk never crosses at a crossing, instead turning left or right at the intersection when it meets the crossing in the shadow diagram.

The purpose of this paper is to give a careful proof of the Clock Theorem for 1-linkoids. We will write a sequel to this paper with applications of the Theorem. 

The paper is organized as follows: Section 2 describes the Mock Alexander polynomial and how it can be computed in terms of clock states and in terms of trails. The section then details the definitions of clock moves, shelling algorithms, trails and the needed lemmas to construct a proof of the Clock Theorem. Section 3 discusses applications of the Clock Theorem for knotoids and plans for further research.


\section{A generalized Alexander polynomial for linkoids}

The theory of knotoids was introduced by Turaev \cite{Turaev}. Knotoids and their variants such as linkoids, starred knotoids, have been studied further in \cite{GK1, MAP1,MAP2, GG, BKP, A}. In this section we make a quick review of linkoids and starred linkoids, and a generalization of the Alexander-Conway polynomial for starred linkoids that we call the Mock Alexander polynomial.

\begin{definition}\normalfont
An \textit{$n$-linkoid diagram} in a surface is an immersion of a number of unit circles and exactly $n \geq 0$ unit intervals into the surface. The images of circles are \textit{knot components}, the images of unit intervals are  \textit{knotoid components} of a linkoid diagram. Specifically, if a linkoid diagram consists of only one knotoid component then it is called a \textit{knotoid} diagram, and a linkoid diagram with only a number of knot components is a link diagram. The images of $0$ and $1$ are considered to be distinct endpoints and called the \textit{tail} and the \textit{head} of a knotoid component, respectively.

Linkoid diagrams are considered up to the equivalence relation induced by Reidemeister moves that take place away from endpoints.  Indeed, it is forbidden to pull/push an arc with an endpoint. Also, each of the components of a linkoid diagram admits an orientation such that the knotoid component is oriented from the tail to the head.


\end{definition}

\begin{definition}\normalfont
A 1-linkoid diagram with endpoints that lie in the same region of the surface is called a \textit{knot-type} 1-linkoid diagram. Otherwise, a 1-linkoid diagram is called a \textit{proper} 1-linkoid diagram.

\end{definition}

We will restrict our attention to 1-linkoids in this paper. We define below a variant of a 1-linkoid diagram that is endowed with a special decoration. We call these variants \textit{starred linkoids}. Starred linkoids were studied in \cite{MAP1, MAP2, FKT} in the constructions of a generalized Alexander-Conway polynomial for linkoids.

\begin{definition} \label{def:starred}\normalfont
A \textit{starred 1-linkoid diagram} is a 1-linkoid diagram one of whose regions is endowed with a star.
\end{definition}

For any 1-linkoid diagram in $S^2$ , we can choose one of the regions containing an endpoint of the 1-linkoid and place a star in that region. With this choice, the starred region can be depicted as the exterior (unbounded) region for the 1-linkoid, represented in the plane. In such a representation, a point (call it $\infty$) is removed from the two-sphere. Call this starred planar diagram a {\it standard representation} of the 1-linkoid. We will consider standard representations of 1-linkoid diagrams in the sequel.

Starred 1-linkoid diagrams are considered up to the equivalence relation induced by Reidemeister moves which avoids the starred region. 


\begin{definition}\normalfont
The \textit{universe} of a linkoid diagram (starred or unstarred) is the graph that is obtained by ignoring the under/over information of crossings of the linkoid diagram.

\end{definition}

\begin{definition}\normalfont
 Let  $L$ be a connected starred 1-linkoid diagram.  A \textit{clock state} of $L$ is obtained by placing a marker at every bounded face of the universe of $L$,  placed at exactly one of the crossings that is incident to the region.
 
See the full list of clock states of the given knotoid diagram in Figure \ref{fig:clstates} where a marker at a crossing is given as a black marker.

\end{definition}

   \begin{figure}[H]
\centering
\includegraphics[width=.95\textwidth]{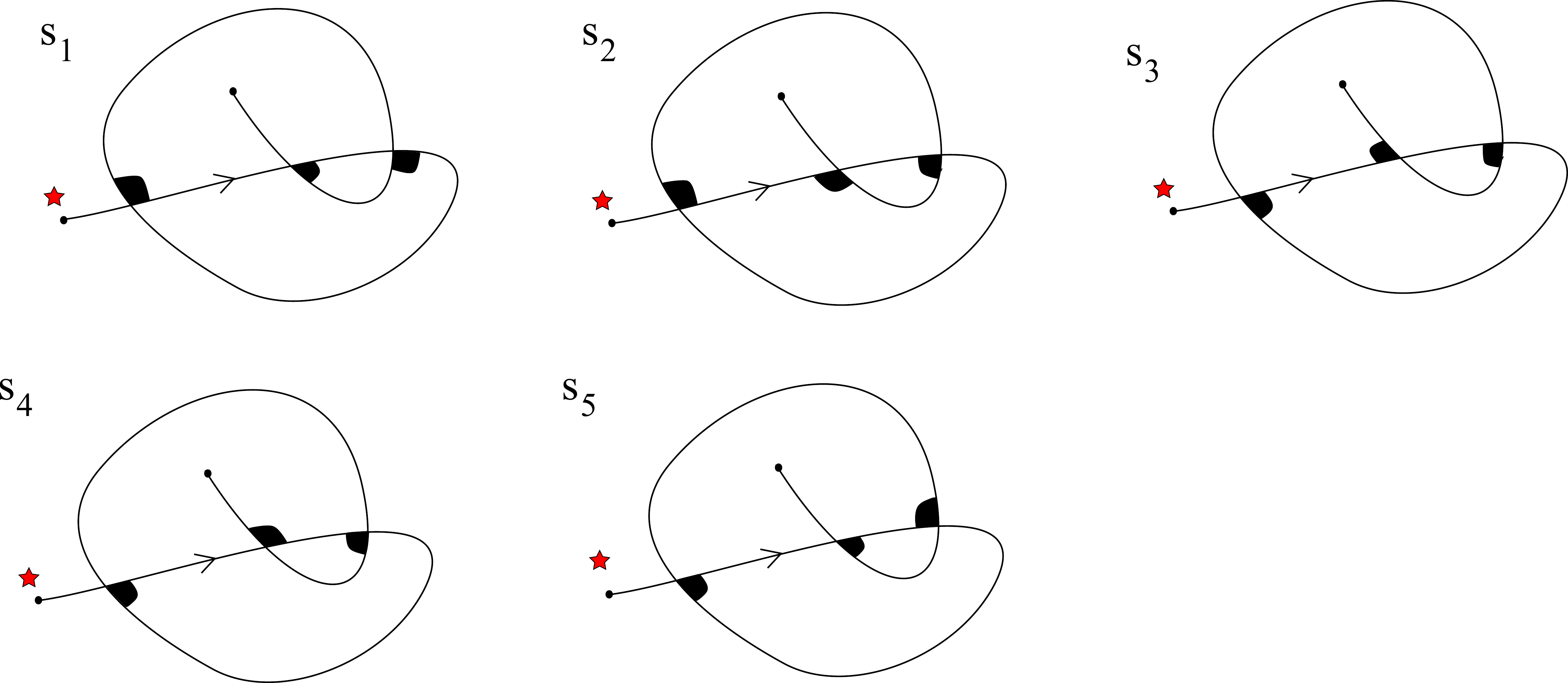}
\caption{All clock states of a starred knotoid diagram.}
\label{fig:clstates}
\end{figure}
Note that it easily follows from the Euler's formula that a starred 1-linkoid diagram admits equal number of unstarred regions and crossings. This enables that any starred 1-linkoid diagram admits a clock state.  

\begin{figure}[H]
\centering
\includegraphics[width=.35\textwidth]{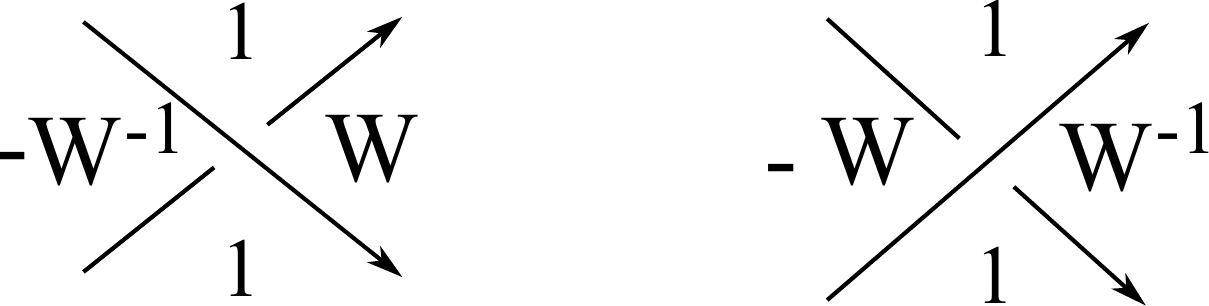}
\caption{Labels at local regions incident a positive and negative crossing.}
\label{fig:labelgeneral}
\end{figure}

To define the Mock Alexander polynomial for connected 1-linkoids in $S^2$, we first define the weights at crossings.  We assume any connected 1-linkoid diagram $L$ is oriented and its knotoid component is oriented from its tail to its head, and we assume that $L$ is endowed with a star in the region that is adjacent to its tail.

\begin{definition}\normalfont
Let $L$ be a connected, oriented and starred $1$-linkoid diagram.The \textit{weight} of a clock state $S$ of $L$, denoted by  $<L~ |~ S>$  is defined to be the product of local weight labels that the markers of the state $S$ indicate at crossings. See Figure \ref{fig:labelgeneral} for  local weights assigned at a positive and a negative crossing.
 \end{definition}

\begin{definition}\normalfont \cite{MAP1}
Let $L$ be an oriented starred 1-linkoid diagram in $S^2$. The \textit{Mock Alexander polynomial} of $L$ is the Laurent polynomial with integer coefficients defined as,

$$\nabla_{L} (W) = \sum_{S \in \mathcal{S} } < L ~|~ S >,$$
where $\mathcal{S}$ denotes the set of all clock states of $L$. 
\end{definition}

\begin{theorem}\cite{MAP1}
The Mock Alexander polynomial is an invariant of oriented, starred 1-linkoids in $S^2$.
\end{theorem}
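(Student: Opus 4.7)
The plan is to show that $\nabla_{L}(W)$ is preserved by each Reidemeister move performed in the complement of the starred region. Since the Mock Alexander polynomial is defined as a state sum over clock states, the standard strategy is to produce, for each move, a local, weight-preserving correspondence between the set of clock states $\mathcal{S}$ of the diagram before the move and the set $\mathcal{S}'$ of clock states of the diagram after the move, so that
\[
\sum_{S \in \mathcal{S}} \langle L \,|\, S \rangle \;=\; \sum_{S' \in \mathcal{S}'} \langle L' \,|\, S' \rangle.
\]
Since all changes are strictly local and happen away from the star (so the unbounded/starred region is unchanged), it suffices to analyze the markers lying in the regions affected by the move, the rest of the state being identified tautologically.

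For R1, adding a curl introduces one new crossing $c$, two new edges, and, by Euler's formula, exactly one new bounded region, namely the monogon $m$ bounded by the curl. Since $m$ is incident only to $c$, every clock state of the new diagram is forced to place the marker of $m$ at $c$, and the remaining markers biject with the markers of a clock state of the old diagram. The key local verification, reading off Figure \ref{fig:labelgeneral}, is that the corner of $c$ that lies inside $m$ is labeled $1$ in every one of the four R1 configurations (two crossing signs, two curl orientations); so the state-weight is unchanged and invariance follows.

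For R2, two new crossings $c_1, c_2$ of opposite sign appear, together with a new bigon region $b$ between them, while two flanking regions acquire extra corners on their boundaries. One enumerates the possibilities for the marker of $b$ (either at $c_1$ or at $c_2$), and for the markers of the two flanking regions, producing a small set of local cases. Using that the labels at $c_1$ and $c_2$ are mirrored because their signs are opposite, the pair of cases where the marker of $b$ sits at $c_1$ versus at $c_2$ contribute either $W\cdot W^{-1} = 1$ or cancel in pairs, leaving an honest weight-preserving bijection with $\mathcal{S}$. For R3, the triangle move preserves the numbers of crossings, edges, and faces; the three regions meeting the triangle shift shape, and one defines the bijection $\mathcal{S} \leftrightarrow \mathcal{S}'$ by transporting the three markers across the triangle in the only combinatorially consistent way. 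A direct check on the labels shows that the product of local weights at $c_1, c_2, c_3$ is unchanged by the rearrangement.

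The main obstacle, as usual in state-sum invariance proofs of this type, is R2: there are several sub-cases to enumerate depending on the relative orientations of the two strands and on which strand passes over, and for each sub-case one must tabulate the labels in Figure \ref{fig:labelgeneral} carefully and then verify the cancellation/matching argument. The R3 verification is less subtle, and R1 reduces to a single label reading. I also note one global point to record: because the moves are required to avoid the starred region, the unstarred-region count continues to equal the crossing count after each move, so the state sum on the right remains well defined; and because the endpoints are never crossed by a Reidemeister move, the distinguished starred region adjacent to the tail is preserved throughout, which is what makes the whole argument local.
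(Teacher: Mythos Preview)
The paper does not give its own proof of this theorem: it is stated with the citation \cite{MAP1} and no argument follows. So there is nothing in the present paper to compare your proposal against; the invariance proof lives entirely in the cited reference.

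That said, your outline is the standard and correct strategy for a state-sum invariant of this type, and it is almost certainly what \cite{MAP1} does (perhaps phrased via the permanent reformulation mentioned in the Discussion section). A few comments on the sketch itself. Your R1 argument is fine once one checks from Figure~\ref{fig:labelgeneral} that the monogon corner really carries the label $1$; this is the convention-dependent step and is worth writing out explicitly for all four curl types rather than asserting it. For R2, note that by the Euler count two new unstarred regions appear, not one: in addition to the bigon $b$, one of the flanking regions is split in two, so your local case analysis must track an extra forced marker, and the cancellation argument has more sub-cases than your paragraph suggests. For R3 the phrase ``the only combinatorially consistent way'' hides the real work: the central triangle region moves to the other side, and the bijection on states is not canonical on the nose --- some states on one side correspond to pairs of states on the other that cancel, exactly as in the classical FKT argument. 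None of this is fatal, but as written your proposal is a plan rather than a proof; the substance is in the case checks you have deferred.
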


\begin{remarks}\normalfont
\begin{enumerate}
\item 
As shown in \cite{FKT},  The Alexander-Conway polynomial of any oriented, connected link diagram $L$ is equal the Mock Alexander polynomial of a starred link diagram obtained by endowing any pair of adjacent regions of $L$ with stars.
\item The Mock Alexander polynomial can be defined for any admissable $n$-linkoid lying in a surface of genus $g$ for $g \geq 0$. See \cite{MAP1} for details. 

\end{enumerate}
\end{remarks}

\subsection{Clock states and Trails} \label{sec:trails}
Similar to the case of classical knots studied in \cite{FKT}, there is a 1-1 correspondence between clock states of a 1-linkoid diagram and what we call {\it trails} of the diagram. In this section we study this correspondence.

\begin{definition}\normalfont
A \textit{smoothing} of a crossing in a 1- linkoid diagram consists in removing the crossing and then connecting the resulting ends of the strands without creating a crossing. We show two possible smoothing of a crossing in Figure \ref{fig:smooth}.  The smoothing that removes the crossing in the vertical direction (with respect to the top to bottom direction of the plane) is called a \textit{vertical} smoothing, otherwise it is called a \textit{horizontal} smoothing.  
\end{definition}

\begin{figure}[H]
\centering
\includegraphics[width=.3\textwidth]{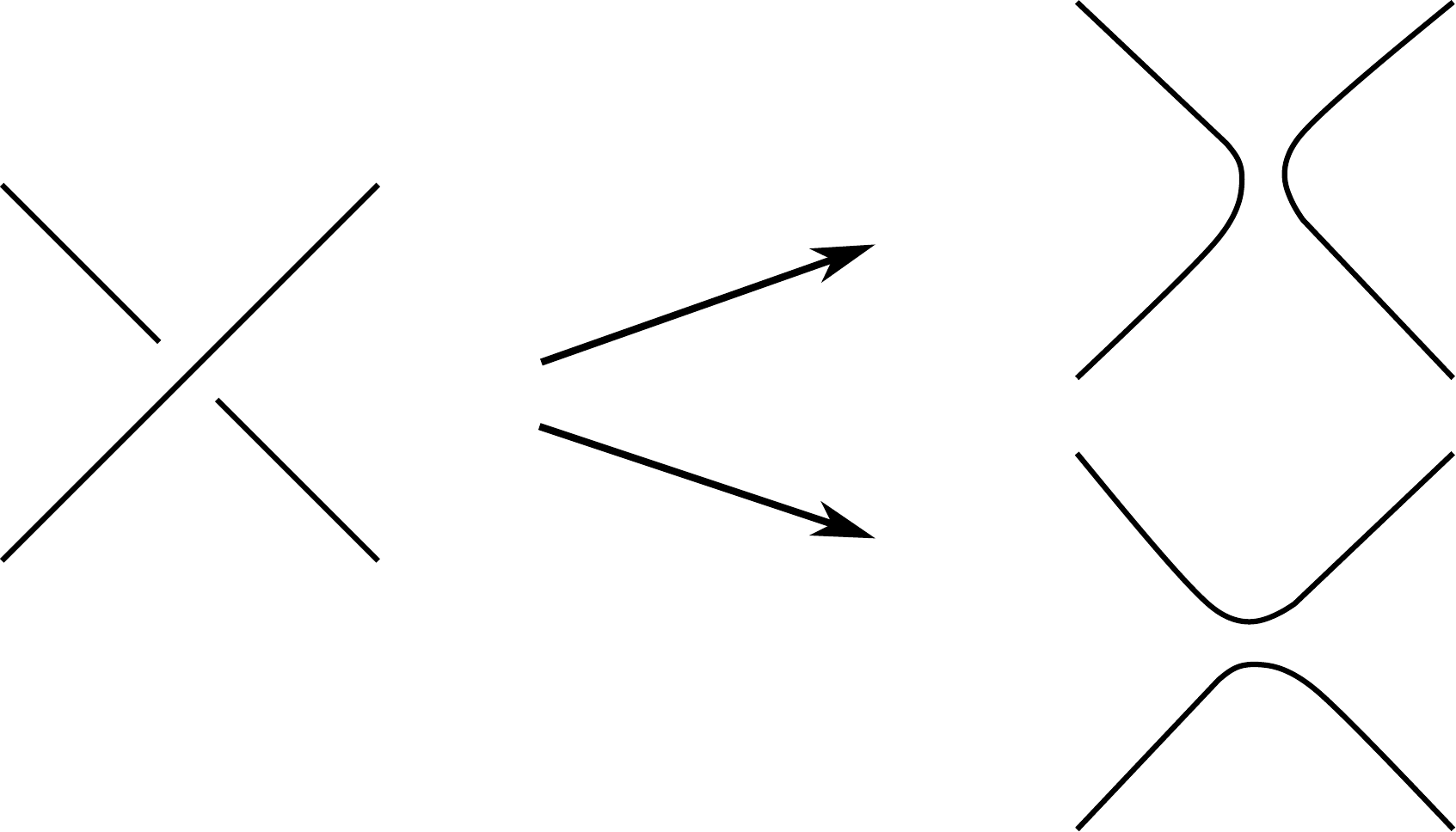}
\caption{The vertical and horizontal smoothings of a crossing.}
\label{fig:smooth}
\end{figure}


\begin{definition}\normalfont

A \textit{trail} of a 1-linkoid diagram is a simple (non-self intersecting) path that traverses each edge of the underlying graph of the diagram exactly once.   \end{definition}


\begin{proposition}
Every 1- linkoid diagram admits a trail.

\end{proposition}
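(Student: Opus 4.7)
The plan is to construct the trail directly by manipulating \emph{smoothing states} of $L$. For each crossing of $L$ pick (arbitrarily) one of the two non-crossing smoothings -- horizontal or vertical; this data is a smoothing state $\sigma$. Applying $\sigma$ at every crossing turns the underlying graph into a compact $1$-manifold with boundary: since the only degree-one vertices of the underlying graph are the tail and the head, the smoothed diagram must decompose as a disjoint union of finitely many simple closed curves together with exactly one simple arc whose endpoints are the tail and the head. When the number of components equals $1$, that arc traverses every edge of the underlying graph exactly once and, by construction, turns at every crossing; it is therefore a trail.

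The remaining task is to reduce the number of components to one. The key step is a local merging lemma: if a crossing $c$ has its two $\sigma$-strands lying in different components $A$ and $B$, then flipping the smoothing at $c$ to the other non-crossing pairing merges $A$ and $B$ into a single component, so that the total count of components decreases by exactly one. I would verify this by a direct local trace around the four edges $e_1, e_2, e_3, e_4$ at $c$ in cyclic order: exchanging the pairing from $\{e_1,e_2\},\{e_3,e_4\}$ to $\{e_1,e_4\},\{e_2,e_3\}$ produces a single walk that sweeps through both former strands.

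To close the argument, I would invoke the connectedness of the underlying graph $G$ of $L$: as long as the smoothed diagram has more than one component, some crossing of $L$ must have its two strands in different components, for otherwise the components would constitute disjoint connected subgraphs whose union is $G$, contradicting the connectedness of $G$. Iterating the merging lemma strictly decreases the number of components at each step, so the process terminates in a single-component state -- the desired trail from the tail to the head. I expect the main obstacle to be the careful local bookkeeping in the merging lemma, namely checking that switching the pairing at a shared crossing really combines the two strands into one walk and does not instead split a component or leave the count unchanged; this reduces to a finite, planar case check on the cyclic ordering at $c$.
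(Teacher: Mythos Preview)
Your proof is correct and reaches the same destination as the paper's, but the route is organised differently. The paper smooths the crossings \emph{one at a time}, choosing at each step a smoothing that keeps the diagram connected; its sole lemma is that at any crossing of a connected diagram at least one of the two smoothings preserves connectivity, and this is left to the reader. You instead smooth \emph{all} crossings at once to an arbitrary state and then repair disconnections by flipping crossings whose two strands lie in distinct components, using connectivity of the underlying graph to guarantee such a crossing exists whenever more than one component remains. Your merging lemma and the paper's ``one smoothing always keeps connectivity'' are two faces of the same local fact---the two smoothings at a $4$-valent vertex act oppositely on the component count---so the difference is one of packaging: the paper's greedy argument is terser, while your correct-then-merge argument makes the strict decrease in components explicit and is closer in spirit to the standard proof that connected $4$-regular graphs are Eulerian. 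Note that both arguments tacitly assume the $1$-linkoid diagram is connected; you state this explicitly, the paper does so only implicitly.
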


\begin{proof}
We smooth each crossing of a 1-linkoid diagram in such a way that the connectivity is maintained at the end of the smoothing process. Such smoothing exists for a crossing. In fact, if one of the smoothings of a crossing disconnects the diagram, the other  smoothing keeps the diagram connected as the reader can verify easily.

\end{proof}

In Figure \ref{fig:trail} we list all trails of the knotoid diagram depicted in Figure \ref{fig:clstates}.
 
\begin{figure}[H]
\centering
\includegraphics[width=.93\textwidth]{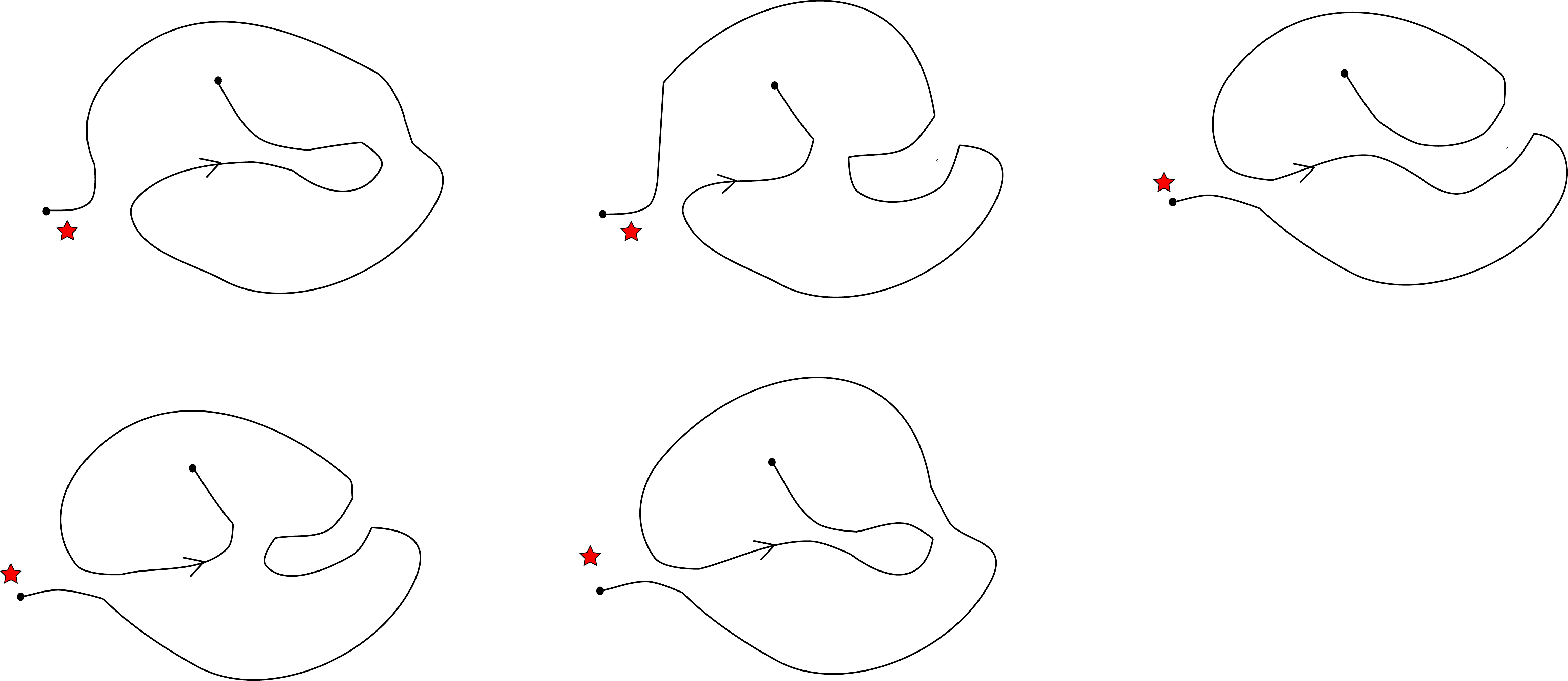}
\caption{Trails of a knotoid diagram.}
\label{fig:trail}
\end{figure}

\begin{proposition}
Let $L$ be a starred 1-linkoid diagram in $S^2$ such that the starred region is the exterior region. Every trail of $L$ determines a unique clock state of $K$.
\end{proposition}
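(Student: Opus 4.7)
The plan is to build a canonical clock state $S_T$ from a trail $T$ of $L$ in three steps: extract a smoothing at each crossing from $T$, convert each smoothing to a marker position, and verify that the resulting assignment is indeed a clock state.

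First, $T$ induces a smoothing at every crossing. At a crossing $c$, the trail traverses each of the four incident edges exactly once and so visits $c$ exactly twice; because $T$ is simple, its two passages pair the four edge-ends into two disjoint arcs, which is precisely one of the two smoothings of $c$. Call this smoothing $\sigma_c$. Second, at each crossing $c$ I use $\sigma_c$, together with the orientation of the knotoid and the sign of $c$, to designate one of the four local quadrants at $c$ as the marker's location, following the labeling conventions of Figure \ref{fig:labelgeneral}. This is a purely local rule, so the marker at $c$ is manifestly determined by $T$ alone.

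The third step is to verify that $S_T$ is a clock state, i.e., that every unstarred region of $L$ contains exactly one marker and that the starred region contains none. Since the number of crossings of $L$ equals the number of unstarred regions (Euler's formula applied to the universe of $L$), it suffices to prove injectivity of the placement: no two markers land in the same region. I would approach this by induction on the number of crossings via a shelling argument. Namely, pick a crossing $c_0$ incident to the starred region and replace $L$ by the smaller starred 1-linkoid $L'$ obtained by smoothing $c_0$ along $\sigma_{c_0}$; the trail $T$ descends to a trail $T'$ of $L'$ (since $\sigma_{c_0}$ was already the local resolution of $T$ at $c_0$), so by the inductive hypothesis the crossings of $L'$ accommodate exactly one marker per unstarred region of $L'$, and one checks that the marker prescribed at $c_0$ by the local rule sits in the unique quadrant of $L$ that is split off from the starred region of $L'$ when $c_0$ is reintroduced. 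This fills precisely one previously unmarked region and leaves all the others untouched, completing the induction.

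The main obstacle is this third step — the global compatibility between the local marker rule and the distribution of markers into regions. The induction will require a shelling lemma guaranteeing the existence of a suitable $c_0$ together with a careful verification that smoothing $c_0$ by $\sigma_{c_0}$ really does separate the designated quadrant from the starred region of $L'$, so that the bookkeeping of markers and regions matches up across the shelling step. An alternative route avoids induction altogether by arguing directly: if two markers were to land in the same region $R$, then tracing the boundary of $R$ and applying the local rule at each crossing of $\partial R$ would force $T$ to either self-intersect or miss an edge of $\partial R$, contradicting the defining properties of a trail.
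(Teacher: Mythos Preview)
Your Step~2 is where the argument breaks down, and the gap is not repairable along the lines you propose. Figure~\ref{fig:labelgeneral} records the \emph{weights} $W,W^{-1}$ attached to the four local quadrants for the purpose of evaluating the Mock Alexander polynomial; it does not encode any rule for selecting a quadrant from a smoothing. More importantly, no purely local rule of the kind you describe can exist. A smoothing $\sigma_c$ singles out a pair of \emph{opposite} quadrants at $c$ (the two that are merged by the smoothing), and the marker must go into one of them; but the orientation of the knotoid and the sign of $c$ are data of the universe, not of the trail, so your rule would assign the \emph{same} quadrant to any two trails that happen to agree at $c$. Now take any crossing $c$ whose two opposite quadrants $A,C$ are both unstarred: there are clock states $S_1,S_2$ with the marker at $c$ in $A$ and in $C$ respectively, and under the state-to-trail correspondence both produce the same smoothing $\sigma_c$. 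Your local recipe would then send the corresponding trails $T_1,T_2$ to the same marker position at $c$, so at least one of $S_{T_1},S_{T_2}$ fails to be the intended state, and in fact fails to be a clock state at all. This also undermines your Step~3 induction, since the claim that ``the marker prescribed at $c_0$ sits in the quadrant split off from the starred region of $L'$'' is exactly the global statement that a local rule cannot see.

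What the paper does instead is use global information in an essential way. A trail $T$ determines a spanning tree $\mathbf{t}$ of the dual graph of the universe (vertices are regions, one edge through each smoothed crossing), and $\mathbf{t}$ is rooted at the starred region. Directing every edge of $\mathbf{t}$ toward the root picks out, at each crossing, exactly one of the two merged quadrants --- the one into which the arrow points --- and that is where the marker goes. The tree structure is what guarantees for free that each unstarred region receives exactly one marker (its parent edge) and the root receives none, so no separate injectivity argument or shelling induction is needed. If you want to salvage your approach, you would need to replace the local rule in Step~2 by this rooted-tree orientation; once you do, Step~3 becomes immediate.
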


\begin{proof}
Let $t$ be a trail of $L$. The trail $t$ determines a directed rooted tree underlying $K$ as follows. The root vertex is determined to be the star placed at the exterior region and each of the regions of $L$ other than the exterior region is endowed with a vertex. Each pair of vertices that lie in two distinct regions of $L$ is connected with an edge that passes through a smoothed site in $t$ to which both vertices are adjacent. The resulting graph is clearly a rooted tree. We endow each edge of the rooted tree with an arrow that is directed  towards the root of the tree. Let $\bf{t}$ denote the rooted tree that is induced by $t$ in this way.

We can consider the arrow on an edge of $\bf{t}$ as an indicator of a state marker at the crossing that the edge is passing through when it gets smoothed: A state marker is placed on the local region that is adjacent to the crossing and where the arrow goes in. In this way, we obtain a unique clock state of $L$ associated to the given trail $t$. 

See Figures \ref{fig:tree}  and \ref{fig:bttree} for an illustration.
\end{proof}

\begin{figure}[H]
\centering
\includegraphics[width=.75\textwidth]{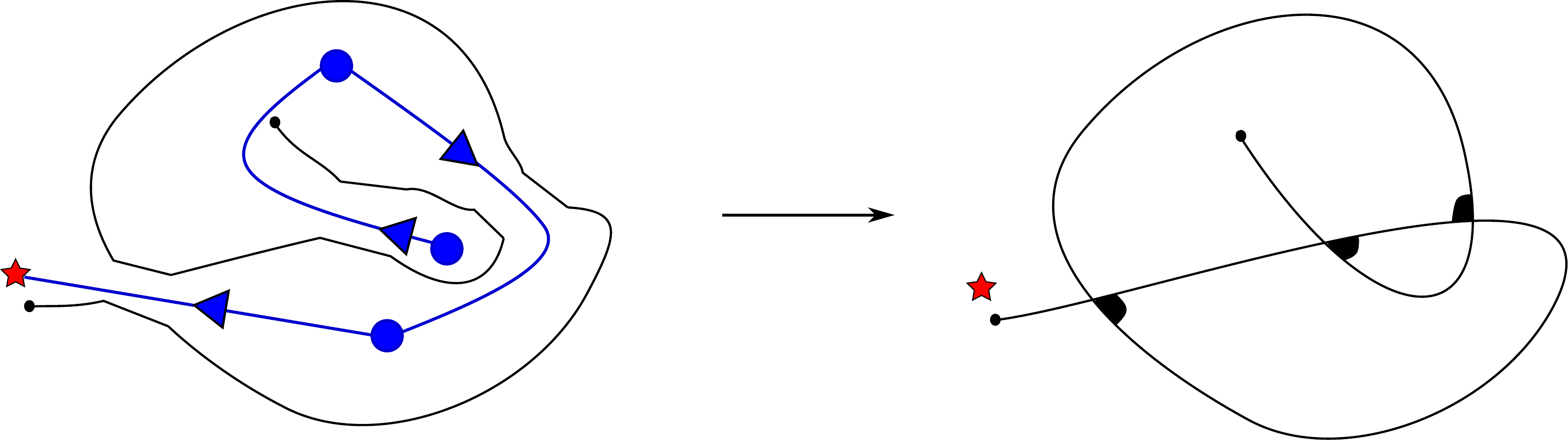}

\caption{The directed tree determined by a trail and the corresponding state.}
\label{fig:tree}
\end{figure}

\begin{proposition}\label{prop:Jordan}
Let $L$ be a starred 1- linkoid diagram. Each clock state of $L$ determines a  unique trail.
\end{proposition}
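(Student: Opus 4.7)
The plan is to reverse the trail-to-state construction of the previous proposition. Given a clock state $S$ of $L$, at each crossing $c$ the marker of $S$ sits in one of the four incident local regions; I would smooth $c$ in the unique way that keeps the marker's region locally separated from the region diagonally opposite to it across $c$. This prescription determines a canonical smoothing of every crossing of $L$, and hence a canonical candidate arc system $t(S)$. The substantive content of the proposition is that $t(S)$ is in fact a trail, i.e.\ that the smoothed diagram is a single non-self-intersecting arc from the tail to the head of $L$, rather than such an arc together with some spurious simple closed curves.

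To analyze $t(S)$, I would introduce the dual combinatorial object: a directed graph $\Gamma(S)$ whose vertices are the regions of $L$ in $S^{2}$ and whose edges are obtained by drawing, for each bounded region $R$ with marker at a crossing $c(R)$, a directed edge from $R$ to the region opposite $R$ at $c(R)$; the starred region has no outgoing edge. By construction $\Gamma(S)$ has $C+1$ vertices and $C$ directed edges, and each edge of $\Gamma(S)$ crosses precisely one smoothing arc of $t(S)$. Hence if $\Gamma(S)$ is a tree rooted at the starred region, the smoothed diagram must be a single arc; and because it traverses every edge of the shadow graph exactly once and meets itself only at the smoothed crossings (where it turns rather than crosses), it is precisely a trail. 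The whole proof then reduces to showing that $\Gamma(S)$ is acyclic, since $|V|-|E|=1$ together with acyclicity forces the underlying undirected graph to be a connected tree.

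Proving acyclicity is the main obstacle. My first attempt would be a direct planarity argument: suppose $\Gamma(S)$ contains a directed cycle, pick one bounding an innermost disk $D$ in $S^{2}$, and apply Euler's formula to the sub-diagram inside $D$ together with the clock-state matching condition (each bounded region carries exactly one marker and each crossing receives exactly one marker) to produce a contradiction with the bipartite counts of crossings and bounded regions enclosed by $D$. A second, perhaps cleaner, route is induction on the number of crossings using the shelling lemmas referenced in Section 2: locate a crossing whose marker is in a region adjacent to the starred region, smooth and delete it to obtain a starred 1-linkoid with one fewer crossing whose clock states restrict from $S$, apply the inductive hypothesis to recover a trail, and then reinsert the crossing to extend the trail consistently. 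Once acyclicity of $\Gamma(S)$ is established, uniqueness of $t(S)$ is automatic, since the smoothing at each crossing was dictated purely by the marker position, and the assignment $S\mapsto t(S)$ is by construction the inverse of the trail-to-state map of the preceding proposition, closing the bijection.
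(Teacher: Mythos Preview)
Your overall plan—build the dual directed graph $\Gamma(S)$ on regions and reduce everything to showing $\Gamma(S)$ is a tree—is exactly the right reformulation, and $\Gamma(S)$ is precisely the rooted tree $\mathbf t$ from the preceding proposition. The concrete error is in your smoothing rule: you smooth so as to \emph{separate} the marker region from its diagonal opposite, but the convention that actually inverts the trail-to-state map is the one that \emph{merges} them (this is what the paper means by ``smooth in the direction of the marker''). With your rule the implication ``$\Gamma(S)$ a tree $\Rightarrow$ $t(S)$ a single arc'' already fails on a one-crossing curl: there $\Gamma(S)$ is a single edge, hence a tree, yet your smoothing detaches the loop as a disjoint circle rather than producing an arc. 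Once the rule is flipped, the edges of $\Gamma(S)$ pass \emph{through} the smoothing gaps (they do not cross smoothing arcs), so the regions of the smoothed diagram are exactly the connected components of $\Gamma(S)$, and the substantive step becomes showing $\Gamma(S)$ is connected.

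For that step the paper's argument is close in spirit to your first suggestion but shorter: instead of analysing an innermost cycle of $\Gamma(S)$, it observes that if at some crossing the marker region and its opposite coincide, then smoothing there disconnects the diagram, and on one of the two pieces the inherited markers cannot form a clock state because the Euler count of unstarred regions versus crossings is off. Applied to the successive partially smoothed diagrams, this forces every smoothing to genuinely merge two distinct regions, so after all smoothings every region has been joined to the starred one and $t(S)$ is a single arc. Your second proposed route—induction via the shelling machinery—is not available at this point in the paper: shelling is developed only afterwards and itself leans on the states/trails bijection you are trying to establish.
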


\begin{proof}
Let $s$ be a state of $L$. Every bounded region of $L$ receives exactly one state marker at an adjacent crossing. We smooth each crossing in the direction of the state marker.  We observe that each smoothing connects two distinct regions of $L$. To see this, assume to the contrary: Suppose a smoothing of a crossing with respect to the state marker direction connects a region with itself.  In this case, the smoothing disconnects $L$ into two components, and one of the components fails to be admissable (there is no state marking for that part because the number of regions is not equal to the number of vertices) hence $L$ would be non-admissable.  This yields a contradiction. See Figure \ref{fig:Jordan} for an illustration.

\begin{figure}[H]
\centering
\includegraphics[width=.45\textwidth]{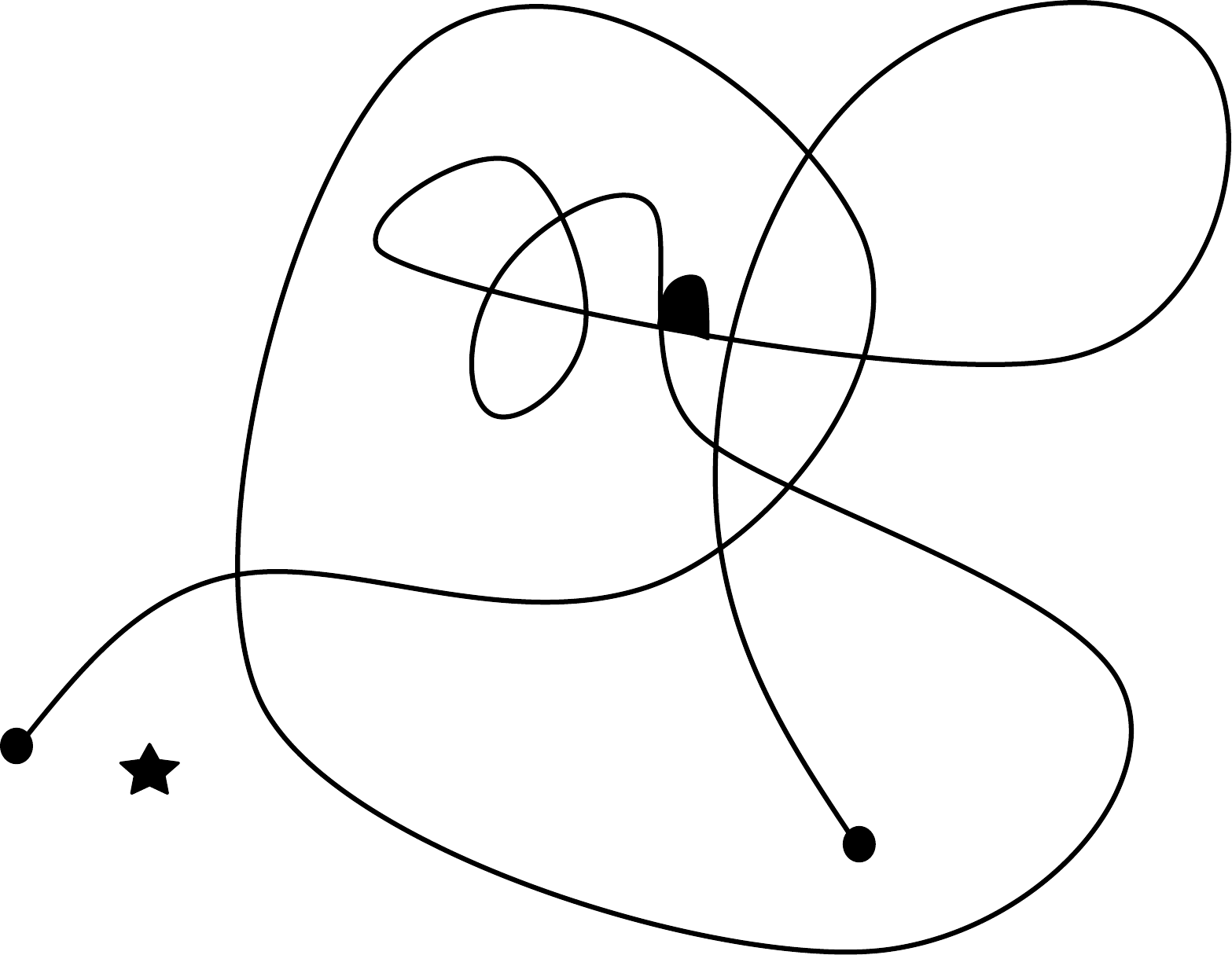}

\caption{A starred knotoid diagram and a state marker at crossing that disconnects the diagram by smoothing it.}
\label{fig:Jordan}
\end{figure}


Therefore smoothing each crossing of $L$ one by one connects each region of $L$ with the starred region. This implies that the resulting curve is a connected curve containing the two endpoints of $L$ and it admits only one region. Moreover the curve traverses each edge of $L$ exactly once. See Figure \ref{fig:trail1} for an illustration.

\end{proof}

\begin{figure}[H]
\centering
\includegraphics[width=.75\textwidth]{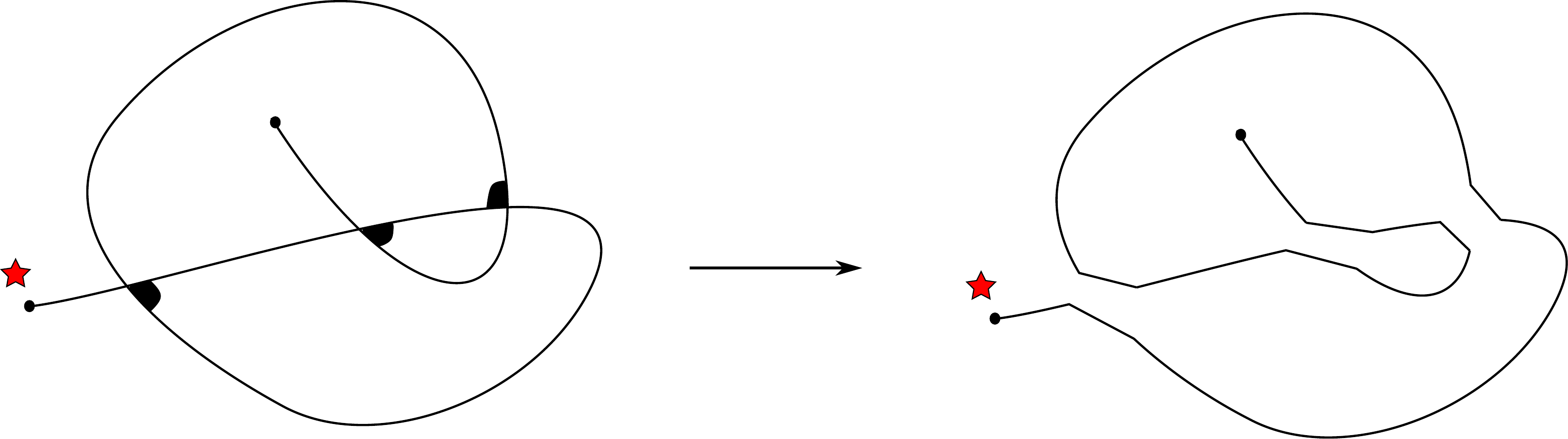}

\caption{The state $s_5$ and the corresponding trail}
\label{fig:trail1}
\end{figure}



\begin{corollary}\normalfont \label{cor:statesandtrails}
There is a one-to-one correspondence between clock states of a starred 1- linkoid diagram and its trails. 
\end{corollary}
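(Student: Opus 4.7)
The plan is to read the corollary as an immediate consequence of the two preceding propositions. The earlier proposition produces a map $\Phi$ that sends each trail $t$ of $L$ to a clock state $\Phi(t)$, and Proposition~\ref{prop:Jordan} produces a map $\Psi$ that sends each clock state $s$ to a trail $\Psi(s)$. To finish it suffices to check that $\Psi\circ\Phi = \mathrm{id}$ and $\Phi\circ\Psi = \mathrm{id}$, i.e., that these two maps are mutually inverse.

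The key idea I would use is that both maps are controlled, crossing by crossing, by the same local datum. At a single crossing one may equivalently specify (i) which of the two possible smoothings is in force, (ii) which pair of diagonally opposite local regions becomes joined through the smoothed site, and (iii) at which of the four local regions the state marker is placed, namely the region into which the rooted-tree edge points toward the star. I would first fix this local dictionary explicitly from the conventions of Figures~\ref{fig:smooth} and~\ref{fig:tree}; after that, the two global identities follow mechanically.

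To verify $\Psi\circ\Phi=\mathrm{id}$ I would start with a trail $t$: at each crossing $t$ prescribes a smoothing joining two local regions, the rooted-tree construction of $\Phi$ orients the edge between these regions toward the star, and the marker of $\Phi(t)$ sits at the head of that arrow. Applying $\Psi$ then smooths ``in the direction of the marker,'' which by the local dictionary is precisely the smoothing used in $t$; since a trail is determined by its collection of smoothings, this yields $\Psi(\Phi(t))=t$. The reverse composition $\Phi\circ\Psi=\mathrm{id}$ I would verify by the mirrored argument: the smoothings of $\Psi(s)$ are, by definition, the marker smoothings of $s$, so the arrow heads of the rooted tree underlying $\Psi(s)$ sit at exactly the marked regions of $s$, and hence $\Phi(\Psi(s))=s$. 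The only genuinely delicate step, and the main obstacle, is the local bookkeeping at a single crossing which identifies the ``arrow-head'' convention used to define $\Phi$ with the ``smooth in the direction of the marker'' convention used to define $\Psi$; once this crossing-level identification is in place, the global identities propagate automatically because $\Phi$ and $\Psi$ both act independently at each crossing.
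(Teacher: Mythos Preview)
Your proposal is correct and matches the paper's approach: the paper states the corollary immediately after the two propositions without further proof, treating the bijection as evident from the trail-to-state and state-to-trail constructions. Your explicit verification that $\Phi$ and $\Psi$ are mutually inverse via the crossing-by-crossing local dictionary is exactly the content the paper leaves implicit, and the ``delicate step'' you flag (matching the arrow-head convention in $\Phi$ with the smooth-in-the-marker-direction convention in $\Psi$) is precisely what Figures~\ref{fig:tree} and~\ref{fig:trail1} are meant to exhibit.
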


Notice that the rooted tree that is induced by a trail of a starred 1-linkoid diagram $L$ comprises all vertices of the dual graph of  the universe of $L$. Then, we have the following.

\begin{corollary}
The number of clock states of a starred 1- linkoid $L$ is equal to the number of maximal trees of the dual graph of the universe of $L$. 
\end{corollary}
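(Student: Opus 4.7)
The plan is to compose two bijections: clock states with trails, and trails with spanning trees of the dual graph of the universe. The first bijection is already supplied by Corollary~\ref{cor:statesandtrails}, so all the remaining work consists in setting up the second bijection and checking it is well-defined in both directions.

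For the forward map, I would reuse the rooted tree $\mathbf{t}$ constructed in the proof of the proposition turning a trail into a state: a trail $t$ gives a tree whose vertex set is the full set of regions of $L$ --- equivalently, the vertex set of the dual graph of the universe --- and whose edges, one per crossing, encode the pair of regions merged by each smoothing. Euler's formula for a connected $1$-linkoid diagram in $S^2$ with $n$ crossings yields exactly $n+1$ regions, so $\mathbf{t}$ has $n+1$ vertices and $n$ edges, and, being connected and acyclic by construction, it is a spanning tree of the dual graph.

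For the inverse map, given a spanning tree $T$ of the dual graph, each edge of $T$ is associated to a crossing and prescribes the smoothing that merges the two regions it joins. Performing all these smoothings simultaneously should yield a single simple arc from tail to head, that is, a trail, and this is the main content to verify; I expect this to be the primary obstacle, since it requires translating the purely combinatorial tree property of $T$ into the topological single-curve property of the smoothed diagram. The argument will mirror the proof of Proposition~\ref{prop:Jordan}: if the smoothings determined by $T$ failed to produce a trail, some individual smoothing would disconnect the diagram, which in turn means the two endpoints of the corresponding edge of $T$ were already joined in $T$ by a path using only the other edges, forcing $T$ to contain a cycle and contradicting that it is a tree. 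Once both maps are shown to be well-defined and mutually inverse, the corollary follows by composition with Corollary~\ref{cor:statesandtrails}.
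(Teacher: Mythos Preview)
The paper offers no argument for this corollary beyond the one-line observation that the rooted tree $\mathbf t$ built from a trail has the full set of regions as its vertex set; your plan is in the same spirit but fills in substantially more detail. So the approaches agree in outline.

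That said, there is a real gap in your inverse map. You write that ``each edge of $T$ is associated to a crossing and prescribes the smoothing that merges the two regions it joins,'' and then argue that if the resulting smoothed diagram were not a trail some smoothing would disconnect it, forcing a cycle in $T$. But this presupposes something you never check: that every spanning tree of the dual graph selects \emph{exactly one} merge-edge at each crossing. Look at what the edges of $\mathbf t$ actually are. At a crossing the tree edge joins the two \emph{opposite} local regions (the pair merged by the chosen smoothing), not two \emph{adjacent} regions sharing an edge of the universe. Those opposite pairs are not edges of the ordinary planar dual $U^*$, so if ``dual graph'' means $U^*$ your forward map does not even land there. If instead ``dual graph'' means the graph on regions whose $2n$ edges are the two possible merge-pairs at each of the $n$ crossings, then $\mathbf t$ is a subgraph, but now a spanning tree on $n+1$ vertices has $n$ edges and nothing a~priori prevents it from using both merge-edges at one crossing and neither at another --- in which case your smoothing prescription is simply undefined, and your disconnection-implies-cycle argument never gets off the ground.

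To close the gap you must first pin down which graph ``the dual graph of the universe'' denotes, verify that $\mathbf t$ is genuinely a subgraph of it, and then prove (not assume) that every spanning tree of that graph uses precisely one of the two merge-edges at each crossing. Only after that does your Jordan-style argument for connectivity become relevant.
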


\begin{note}
The rooted tree that corresponds to a state of a starred 1-linkoid diagram may have branches. See Figure \ref{fig:bttree}.

\end{note}

\begin{figure}[H]
\centering
\includegraphics[width=.6\textwidth]{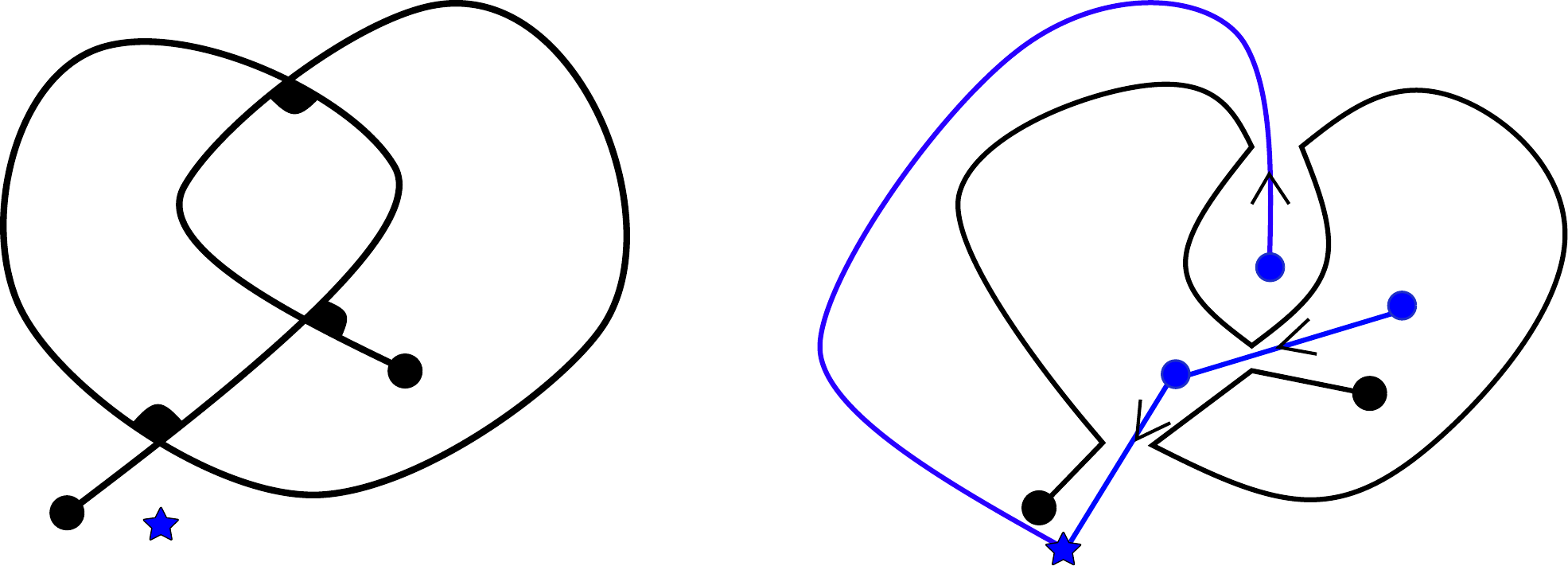}

\caption{A branched tree corresponding to a state.}
\label{fig:bttree}
\end{figure}




\subsection{Clock moves}

We begin this section by defining \textit{clock moves} on clock states of a starred 1-linkoid diagram $L$. Since every starred 1-linkoid diagram can be considered in the plane, we consider $L$ to lie in the plane.
\begin{definition}\normalfont

A \textit{clock move} on a state of a 1-linkoid knotoid diagram $L$ is the 90 degree rotation of either a pair of state markers that lie in regions of $L$ sharing a common boundary or of the state marker that is incident to the head of $L$. The result of a clock move on a clock state of $L$ is another clock state of $L$. A clock move can occur in either clockwise or counter-clockwise direction.

We illustrate clock moves in Figure \ref{fig:clock2} where  the first clock move swaps the markers that lie in distinct regions labeled $A$ and $B$ by a clockwise rotation, and the second clock move rotates the state marker that is incident to the head of the diagram in the clockwise direction, and moves the marker in the same region.  Note that the boundary edge that is shared by two regions may possibly contain a knotted portion. 
\end{definition}

\begin{figure}[H]
\centering
\includegraphics[width=1\textwidth]{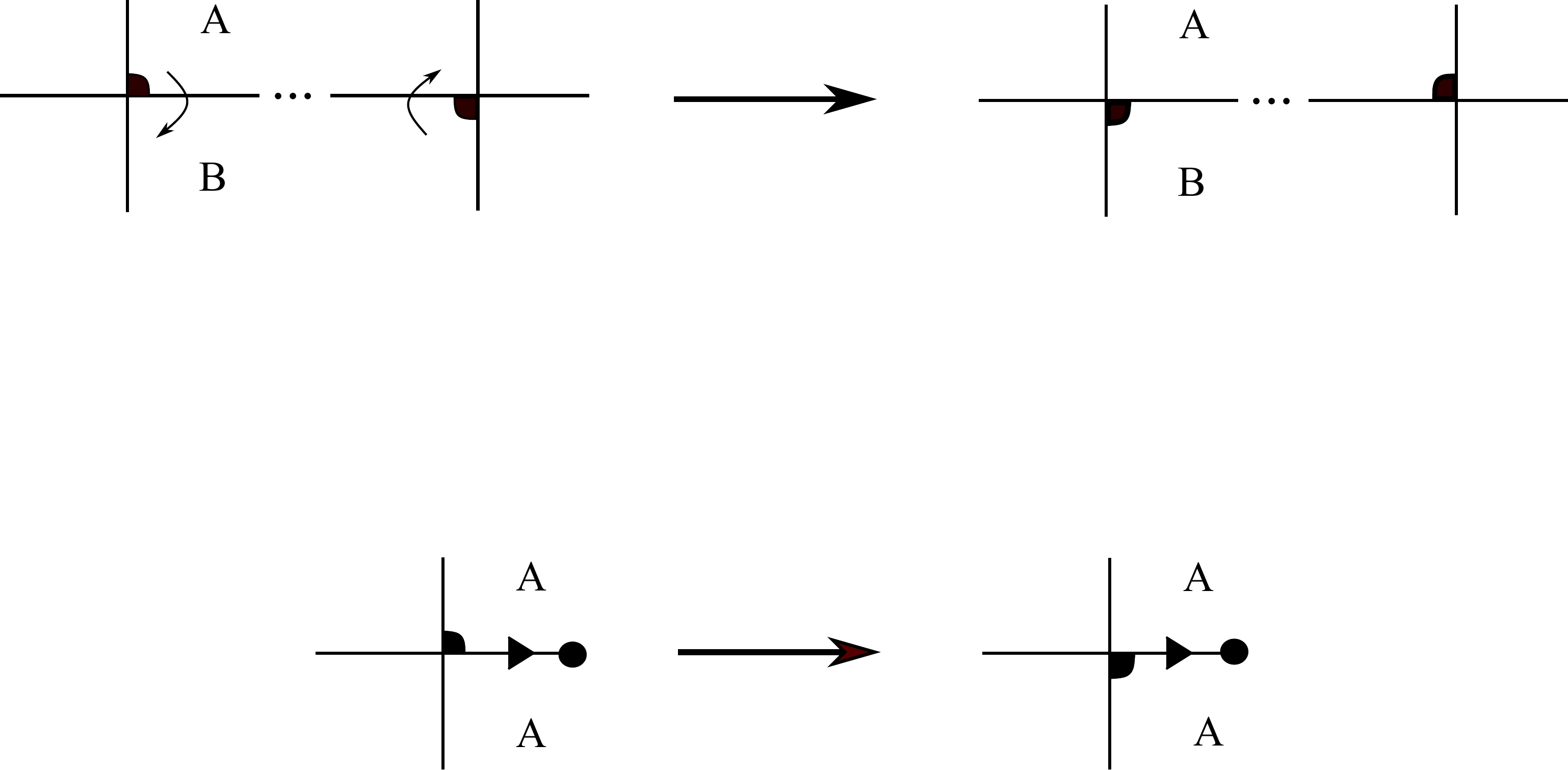}
\caption{Clock moves}
\label{fig:clock2}
\end{figure}

\begin{definition}\normalfont
A  \textit{clocked state} of a 1-linkoid diagram $L$ is a state that admits clock moves only in the clockwise direction.  A \textit{counter-clocked state} is a state that admits clock moves only in the counter-clockwise direction. 
A state of $L$ is called an \textit{extremal state} if it is either a clocked or a counter-clocked state. A state that admits clock moves both in the clockwise and counter-clockwise direction is called a \textit{mixed state}.

\end{definition}

\begin{definition}\normalfont
Let $T_1$, $T_2$ be two trails of $K$ that are determined from a clocked state and a counter-clock state of $L$, respectively. 
$T_1$ is called a \textit{clocked} trail and $T_2$ is called a \textit{counter-clocked} trail. If a trail  of $K$ is obtained from a mixed state, then it is called a \textit{mixed trail}.

\end{definition}

\begin{definition}\normalfont
Let $T$ be a trail of a 1-linkoid universe.  The \textit{resmoothing of a local site} of $T$ is to replace a local smoothing site that is a trivial 2-tangle with the opposite type of trivial 2-tangle,  as shown in Figure \ref{fig:reassem}.  A trail $T^{'}$ is said to be obtained from a trail $T$ by a \textit{single} or \textit{double exchange}  if $T^{'}$ is the result of reassembling one or two sites of $T$, respectively. We call a site where a single or double exchange takes place an \textit{exchange site}.

\end{definition}

For instance, the trails illustrated in Figure \ref{fig:site} are obtained from each other by a single exchange.
 \begin{figure}[H]
\centering
\includegraphics[width=.7\textwidth]{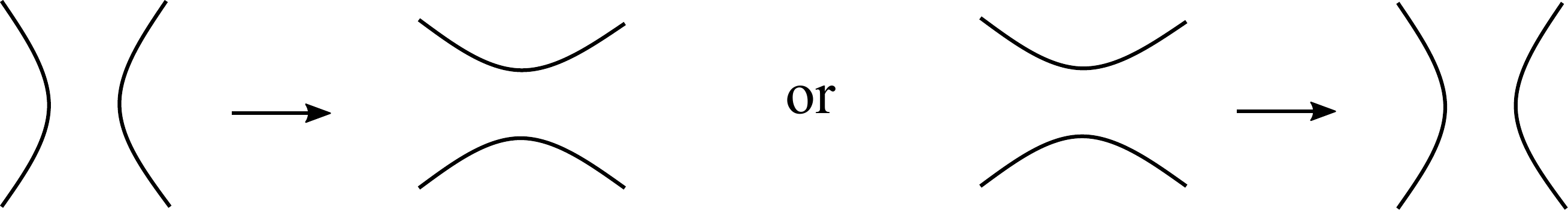}
\caption{The resmoothing of a site.}
\label{fig:reassem}
\end{figure}


\begin{figure}[H]
\centering
\includegraphics[width=.6\textwidth]{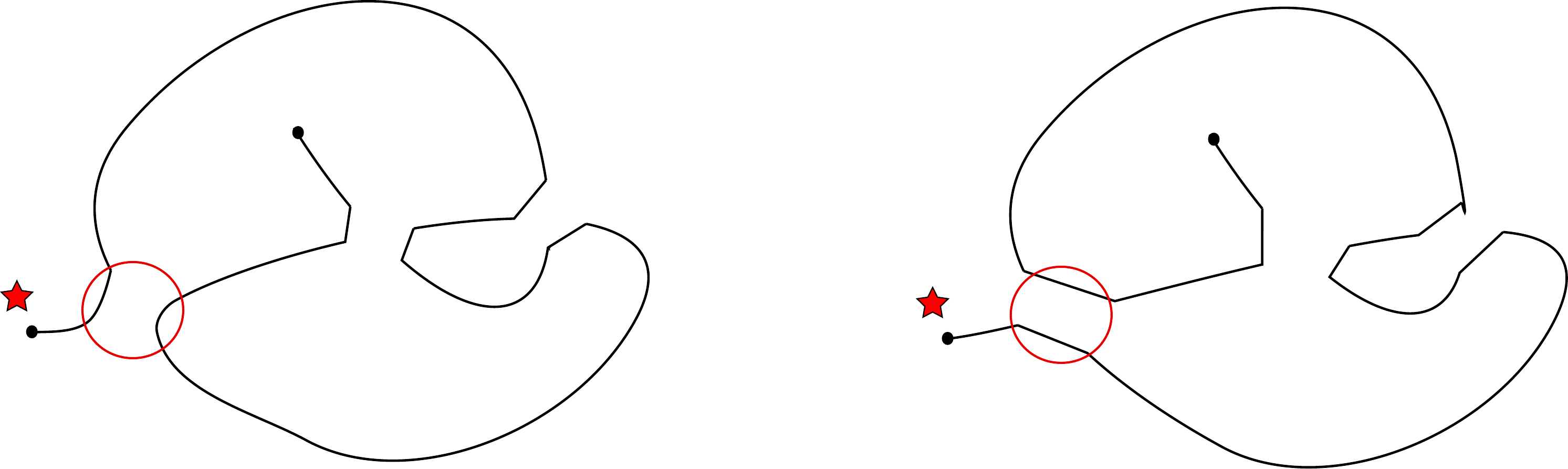}
\caption{The  trails corresponding to the states $s_2$ and $s_4$ of $K$ depicted in Figure \ref{fig:clstates}. They are related to each other by a single exchange.}
\label{fig:site}
\end{figure}

\begin{proposition}\label{prop:trailsrel}
Let $T_1$ and $T_2$ be any two trails of a 1- linkoid diagram $K$. Then there exists a finite sequence of double or single exchanges taking $T_1$ to $T_2$. If $K$ is a knot-type 1-linkoid diagram then $T_2$ is obtained from $T_1$ only by a number of double exchanges. Otherwise, both types of exchanges may be utilized to obtain $T_2$ from $T_1$. 

\end{proposition}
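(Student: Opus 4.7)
The plan is to route the proof through the bijection between trails and clock states established in Corollary \ref{cor:statesandtrails}, so that ``exchange on a trail'' translates directly into ``clock move on the corresponding state.'' Under this dictionary, a double exchange at two sites with a shared local region is precisely a $90$-degree rotation of the marker pair determined at those two crossings (an ordinary clock move on a marker pair), while a single exchange at a crossing incident to the head is exactly the special head clock move. Thus the proposition follows once we show that any two clock states of $K$ are connected by a sequence of clock moves, and that the head move is needed only when $K$ is proper.

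The argument proceeds by induction on $|D|$, the set of crossings at which the clock states $s_1, s_2$ induced by $T_1, T_2$ place their markers in different regions. If $|D|=0$ we are done. Otherwise, pick some $c \in D$ and try to modify $s_1$ so that its marker at $c$ matches that of $s_2$. Simply moving the $c$-marker vacates one bounded region and doubles up another, violating the one-marker-per-bounded-region condition. To restore it, I propagate the displacement: from the doubled region, I reassign its redundant marker to the neighboring crossing on the side where $s_2$ expects it, and iterate. Because of the balance between crossings and bounded regions, this propagation traces out either a closed cycle of forced moves, which decomposes into a sequence of compatible pair-wise clock moves (and hence into double exchanges on the trail side), or an open chain terminating at the crossing incident to the head of $K$, which ends in a single head clock move.

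When $K$ is of knot-type, the head lies in the starred region, which never carries a marker, so no propagation chain can terminate at the head; every chain must close, and only double exchanges occur in the resulting sequence. When $K$ is proper, the head lies in a bounded region and a chain may terminate there, so single exchanges may legitimately appear. In either case, executing the moves along one full propagation strictly decreases $|D|$, which closes the induction.

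The principal obstacle is verifying that the propagation described above is well-defined and finite, and that each intermediate configuration it passes through is a genuine clock state (equivalently, that each intermediate collection of smoothings is still a genuine trail as in Proposition \ref{prop:Jordan}). The combinatorial input here is the Euler-formula equality between the number of bounded regions and the number of crossings, together with the rooted-tree presentation of states from Section \ref{sec:trails}, which ensures that at each stage there is a unique ``next forced move'' until the chain either closes or escapes through the head. This is essentially the shelling-style lemma that Section 2 is built to supply, and it is precisely where the knot-type versus proper dichotomy is decided.
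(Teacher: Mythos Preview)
Your approach has a genuine gap and, separately, inverts the logical order of the paper. The paper proves this proposition directly on trails by a short connectivity argument: induct on the number $n$ of sites where $T_1$ and $T_2$ differ; resmoothing any one differing site of a knot-type trail necessarily splits off a closed loop $\lambda$, and $\lambda$ must carry a second differing site (otherwise $T_2$ would fail to be connected), so two resmoothings produce a new trail with $n-2$ differing sites. For a proper $1$-linkoid the single resmoothing may already stay connected, giving a single exchange; otherwise the same loop argument applies. No states, no markers, no shelling.

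Your route instead tries to establish that any two clock states are joined by clock moves and then translate back. Two problems. First, the propagation chain you describe does not obviously decompose into clock moves: consecutive crossings $c_i,c_{i+1}$ in your chain share a \emph{region} (the doubled one), but a clock move requires the two crossings to share a common boundary \emph{edge}. Without that adjacency, swapping the pair is not a clock move, and the intermediate configurations along your chain are not clock states, so Proposition~\ref{prop:Jordan} cannot be invoked to certify them as trails. Second, your closing appeal to ``the shelling-style lemma that Section~2 is built to supply'' is circular in the paper's architecture: Proposition~\ref{prop:trailsrel} is an input to Proposition~\ref{prop:statemoves}, to the Removal Lemma, and ultimately to the Clock Theorem (Theorem~\ref{prop:clockprop}); you are trying to use the conclusion of that development to prove one of its first ingredients. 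What you are really sketching is a direct attack on the Clock Theorem itself, which is substantially harder than the modest trail-level statement at hand.
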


\begin{proof}

 It is clear that any two trails of $K$ differ at a finite number of smoothing sites.

We proceed by induction to first show that if $K$ is a knot-type 1- linkoid diagram then $T_2$ is obtained from $T_1$ only by a number of double exchanges.  

In Figure \ref{fig:trailxhange} we see that a double exchange is required to take the (knot-type) trail  to the other one given on the top row. This constitutes the initial inductive step as the trail is the simplest trail with the least number of sites in it.

Suppose $T_1$ and $T_2$ are two trails of a knot-type 1-linkoid diagram. Assume that $T_1$ and $T_2$ differ from each other at (say) $n > 0$ sites. Resmooth one site of $T_1$ to form a diagram $t$. $t$ has two components, one of which is a closed loop $\lambda$. The loop $\lambda$ must have another site that differs from $T_1$ or else when we resmooth all $n$ sites, we would still have an extra component corresponding to $\lambda$ or a part of $\lambda$. Therefore we can resmooth at a site on $\lambda$ to form a new trail $T_3$ that differs from $T_2$ at $n - 2$ sites. Thus we can proceed by induction to complete the proof for knot-type trails.

Let $K$ be a proper 1-linkoid diagram. Choose one of the smoothing sites of $T_1$ that differs from $T_2$. If resmoothing of that site keeps $T_1$ connected then this is a single exchange, otherwise we can make a double exchange as we did above. Thus in any case we can reduce the number of differing sites by one or two, and proceed by induction.



\end{proof}

\begin{figure}[H]
\centering
\includegraphics[width=.86\textwidth]{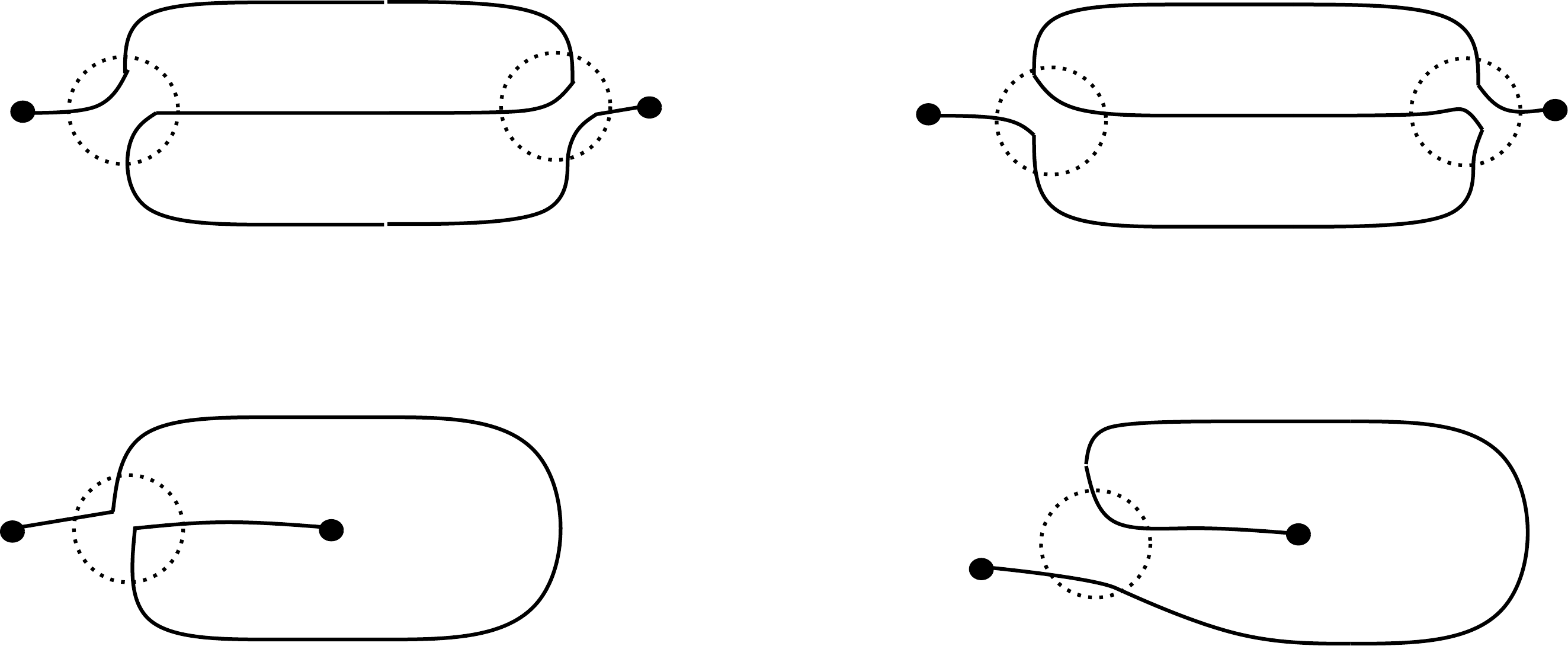}
\caption{Simplest trails that are related to each other by a double and single exchange. }
\label{fig:trailxhange}
\end{figure}

\begin{proposition}\label{prop:statemoves}
Any clock state of a 1-linkoid universe that is not a curl or curl composite admits a number of clock moves. 

\end{proposition}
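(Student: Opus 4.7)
I propose to prove the contrapositive: if a clock state $S$ of a 1-linkoid universe $L$ admits no clock moves, then $L$ is either a curl or a curl composite. The natural machinery is the state-trail correspondence (Corollary~\ref{cor:statesandtrails}), together with the rooted-tree description of a state from Section~\ref{sec:trails}: a state $S$ is encoded as a spanning tree $\mathbf{t}$ of the dual graph of the universe, rooted at the starred region, with each edge directed toward the root in the direction indicated by the corresponding marker.

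The first step is to reinterpret the two types of clock moves in terms of the trail $T$ corresponding to $S$. A pair-type clock move, which $90^\circ$-rotates the markers of two bounded regions $A$ and $B$ sharing a common boundary edge $e$, amounts to a double exchange of $T$ at the two sites located at the endpoints of $e$; a head-endpoint clock move amounts to a single exchange at the site incident to the head arc. In particular, saying that $S$ admits no clock moves is equivalent to saying that $T$ admits no double exchange at a pair of sites joined by a universe edge between two bounded regions, and no single exchange at the head site.

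The second step is to exploit this rigidity. Fix any edge $e$ of $L$ between two bounded regions $A$ and $B$ whose endpoints are crossings $c_1$ and $c_2$. Among the possible placements of the markers of $A$ and $B$ restricted to $\{c_1, c_2\}$, two of them are rotatable clock pairs at $e$; absence of clock moves rules these out. Applying this blockage simultaneously at every edge between bounded regions, combined with the Euler identity that the number of bounded regions equals the number of crossings, should force a highly rigid structure on $L$: I expect each bounded region to be a monogon at the unique crossing hosting its marker. The analogous argument at the head, using the no-single-exchange hypothesis, forces the region adjacent to the head arc to be a monogon at the head. A universe in which every bounded region is a monogon is precisely a curl or a curl composite, completing the contrapositive.

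The main obstacle I anticipate is the combinatorial case analysis at the heart of the second step: one must enumerate the blocked marker configurations at each edge and show that the only simultaneous solution is the monogon structure. I expect the cleanest route to go through planar duality and the rooted tree $\mathbf{t}$: if $L$ is not a curl composite then the dual graph contains a non-trivial cycle, and any spanning tree of such a graph admits at least one edge-swap. The residual work is then to verify that this graph-theoretic edge swap can always be realized as one of the two concrete clock-move types, with extra care near the head endpoint where the single-exchange move takes the place of a pair swap.
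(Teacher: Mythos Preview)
Your approach is genuinely different from the paper's. The paper's argument is very short and does not pass through the contrapositive: it invokes Corollary~\ref{cor:statesandtrails} (states~$\leftrightarrow$~trails) and Proposition~\ref{prop:trailsrel} (any two trails are connected by single or double exchanges), and then observes that a single or double exchange on a trail corresponds to a clock move on the associated state. Since a universe that is not a curl or curl composite has more than one trail, every trail admits an exchange, hence every state admits a clock move. No monogon analysis or explicit dual--graph argument is needed.

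Your plan, by contrast, tries to extract the full structural conclusion ``every bounded region is a monogon'' from the hypothesis ``no clock moves''. The difficulty is in Step~2, and it is more serious than you indicate. The blockage you describe at an edge~$e$ with endpoints $c_1,c_2$ only says something when the markers of $A$ and $B$ actually sit at $c_1$ or $c_2$; if, say, $A$'s marker lies at a crossing $c_3\notin\{c_1,c_2\}$ on its boundary, then the absence of a clock move at~$e$ imposes no constraint on~$A$ at all. So the edge--by--edge blockage does not force the marker of each region to sit at ``its own'' crossing, and in particular does not force regions to be monogons. You would need an additional global argument to propagate these local non-constraints into a structural conclusion, and none is supplied.

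Your fallback via spanning trees is closer in spirit to the paper, but the ``residual work'' you defer---showing that a spanning--tree edge swap can always be realised as one of the two concrete clock moves---is exactly the content of the assertion the paper uses (that an exchange on a trail corresponds to a clock move). In other words, your alternative route circles back to the same step the paper invokes directly from Proposition~\ref{prop:trailsrel}, only reached after a longer detour. The paper's line is both shorter and avoids the flawed monogon case analysis.
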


\begin{proof}
There is a one-to-one correspondence between clock states and trails of a 1-linkoid universe (see Corollary) \ref{cor:statesandtrails}, and any two trails are related to each other by single or double exchanges by Proposition \ref{prop:trailsrel}. A single or double exchange on a trail corresponds to a clock move in a state corresponding to the trail. We can deduce that any clock state of a 1-linkoid universe which determines a trail admitting an exchange on it, admits a clock move on it. 

\end{proof}

\subsubsection{Shell compositions and derivation}

In this section, we define an operation on 1-linkoid universes that is called \textit{derivation}. The derivation operation decomposes the universe of a 1-linkoid diagram into its basic ingredients that we call \textit{shells}. Later in the paper, the derivation and shells will be utilized for showing the existence of  extremal states for a knotoid universe. 

\begin{definition}\normalfont
A \textit{shell} is a circle that sits on an edge, intersecting the edge transversally at one or two points, as shown in Figure \ref{fig:shell}.  The vertices that are connected by the edge are called \textit{endpoints}. A shell consisting of only an edge without a circle is called an \textit{empty shell}. Each shell may contain  \textit{cusp interactions} between its edges and other edges that are accessible. In Figure \ref{fig:shell}, we show examples of shells that do not contain any cusp interactions. Notice that the first shell in the figure is an empty shell. 
Note also that each shell can be considered as a $1$-linkoid universe.

\end{definition}
\begin{figure}[H]
\centering
\includegraphics[width=.75\textwidth]{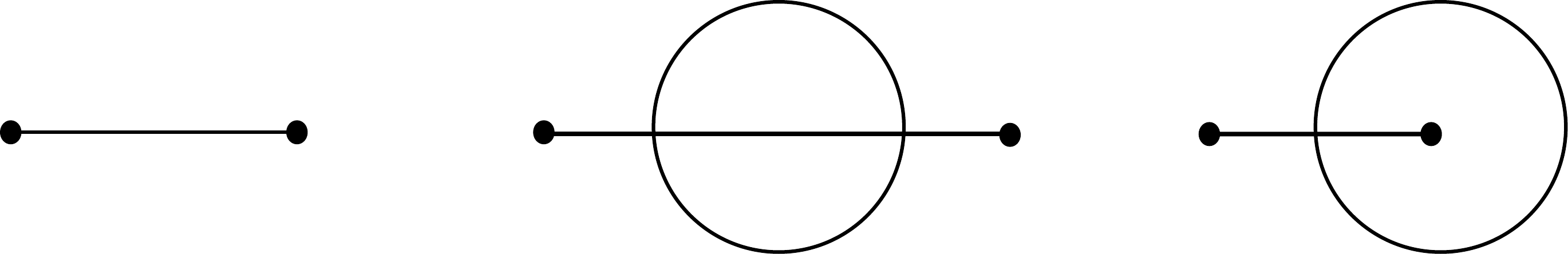}
\caption{Some basic shells.}
\label{fig:shell}
\end{figure}

Let  $s$ be the shell shown in the middle of Figure \ref{fig:shell}. The edges of $s$ that connect the trivalent vertices are called   \textit{top-line}, \textit{mid-line} and \textit{bottom-line}, respectively with respect to the top to bottom direction of the plane. If $s$ is of the type shown as the last shell in Figure \ref{fig:shell}, then the \textit{mid-line} edge is the edge that connects the trivalent vertex to one of the endpoints surrounded by the loop at the trivalent vertex. In this case, the top-line is considered to be the part of the loop that remains above the mid-line and the bottom-line is considered to be the part of the loop that remains below the mid-line.

\begin{definition}\normalfont

A shell composition $s$ is the graphical union of a finite number shells $\{s_i\}_{i}$, each of which is placed on the basic parts of the other,  possibly with cusp interactions between edges of shells. We call each shell $s_i$ a \textit{rider} on $s$. 

 A shell composition without any cusp interactions is called a \textit{pure shell composition}. The shells depicted in Figure \ref{fig:shellcomp} are examples of pure shell compositions.  
\end{definition}

\begin{figure}[H]
\centering
\includegraphics[width=.7\textwidth]{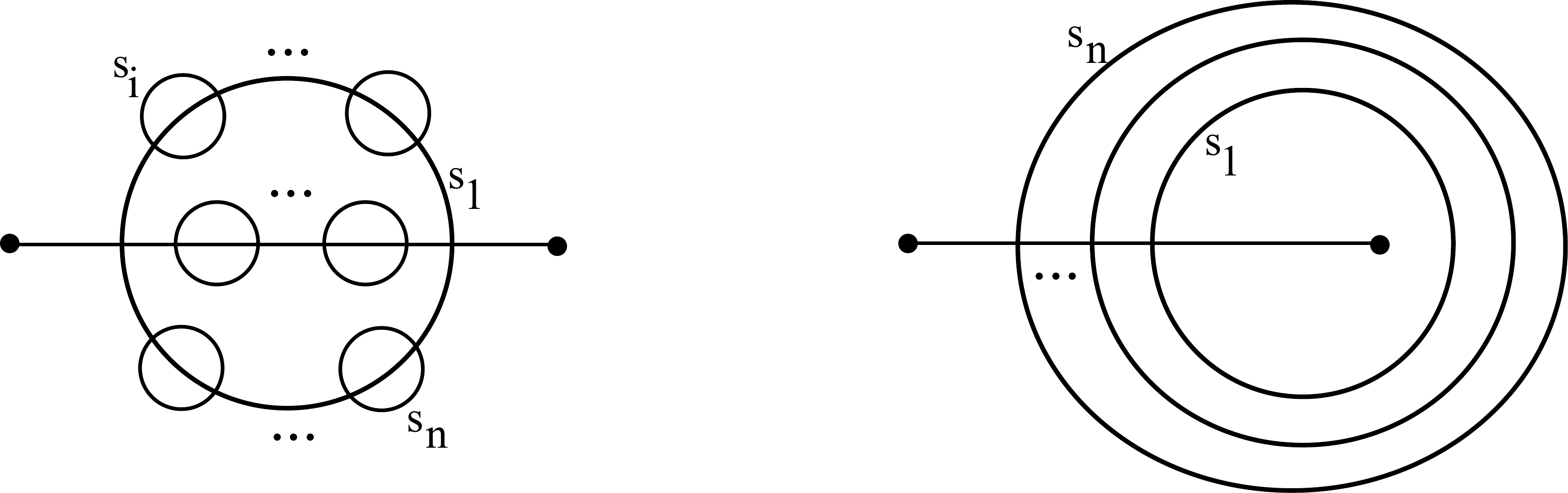}
\caption{Pure shell compositions.}
\label{fig:shellcomp}
\end{figure}

\begin{definition}\normalfont
A \textit{nugatory} vertex on a 1-linkoid universe is a vertex that admits only one type of smoothing that retains connectivity.

\end{definition}

\begin{definition}\normalfont

A \textit{curl} is a knot/link or a 1-linkoid universe that consists of only one vertex. A \textit{curl composite} is a knot or knotoid universe that is a connected sum of curls.

A crossing on a curl is said to be \textit{of curl type}, and a 1-linkoid diagram consisting of only curl type crossings is called a \textit{curl composite}.
\end{definition}

\begin{note}
It is easy to see that a 1-linkoid universe with only nugatory vertices be obtained from a number of trivial  knot and a knotoid diagram by adding curls (by applying RI-moves) to them.

\end{note}

\begin{definition}\normalfont
The \textit{composition}  $U_{K_1}$ $\bigoplus$ $U_{K_2}$ of two 1-linkoid universes $U_{K_1}$, $U_{K_2}$ is defined similarly with the composition of two knotoid diagrams: The head of $U_{K_1}$  is spliced to the tail of $U_{K_2}$. 
\end{definition}

Note that if the head of $U_{K_1}$ lies in a bounded region then $U_{K_2}$ is inserted in the bounded region. The composition operation $\bigoplus$ on universes is associative but not commutative in general.

\begin{definition}\normalfont
A 1-linkoid universe is \textit{prime} if it cannot be decomposed into non-trivial universes, that is $U$ is not of the form $U_1 \bigoplus U_2$, where $U_i$ is a non-trivial universe, for $i=1, 2$. Otherwise it is called a \textit{composite} knotoid universe.
\end{definition}

\begin{definition}\normalfont
 Let $e$ be an edge of a 1- linkoid universe $U_K$, and $q$ be an interior point on $e$.  We remove $q$ from $e$ and add two vertices at the resulting two boundary points in $U_K - \{e\}$. This operation is called the \textit{removal} of an edge.

\end{definition}
Note that removal of an edge adds a new knotoid component to the knotoid universe, which implies that $U_{K}- \{ e\} $ is a 2-linkoid universe.
\begin{definition}\normalfont
An edge of a 1-linkoid universe is called an \textit{endpoint edge} if it is incident to an endpoint of the universe.  An edge $e$ of a composite 1-linkoid universe $U_K$ is called a \textit{connecting edge} if the removal of $e$ is the disjoint union of two non-trivial knotoid universes that can be prime or composite. 
An edge that is incident to an interior region of a 1-linkoid universe is called an \textit{interior edge}. An edge that is incident to the exterior region where there is a star marker is called a \textit{boundary edge}.

\end{definition}

Notice that any interior edge bounds two distinct regions if it is not an endpoint edge.
\begin{definition}\normalfont
Let $U_1$ and $U_2$ be two 1-linkoid universes. Assume that $U_2$ is a knot-type knotoid universe and $p$ be an interior point of an edge of $U_1$. Let $U= U_1 \bigoplus [U_2,p] = U_1 \bigoplus [U_2]$ denote the universe obtained by replacing an interval in $U_1$ containing $p$ by a copy of $U_2$ by identifying the endpoints of $U_2$ by the endpoints of the interval  and in a way that $U_2$ does not intersect the rest of $U_1$. We call this operation the \textit{insertion} of $U_2$ into $U_1$, and
$U$ decomposes into a \textit{carrier} $U_1$ and a \textit{rider} $U_2$. 

Notice that $U_1 \bigoplus [U_2,p] = U_1 \bigoplus U_2$ if $U_2$ is placed at a point that belongs to an endpoint or connecting edge of $U_1$.
\end{definition}

\begin{definition}\normalfont
A knotoid universe is called \textit{simple} if it is prime and has no riders. A knotoid universe that is either composite or containing riders is called a \textit{compound} universe.
\end{definition}

Now we are ready to define the derivation operation.

\begin{definition}\normalfont

Let $U_K$ be the universe of a starred 1-linkoid $K$. We assume the star marker is located at the exterior region.
We define the \textit{derivation} on $U_K$, denoted by $D$ in the following way. 
\begin{enumerate}[i.]
\item  Let $U_{K}$ be simple.  If $U$ is a curl, then the operation $D$ smooths the vertex of the curl and then connects the boundary edges that lie in the opposite local sides of the vertex as in Figure \ref{fig:derivationsimple}. The derivation of a curl is an empty shell with an interaction site containing a pair of cusps.
If $U_K$ is not a curl , then $D$ smooths all its vertices that lie on the boundary edges except the vertex (or vertices) that is  incident to the endpoint that lies in the exterior region. See Figure \ref{fig:derivation} as an example.

The derivation of  $U_K$ is then one of the shells shown in the middle and the rightmost part of Figure \ref{fig:shell} with a number of interaction sites in the form of a cusp at the smoothed crossings. Note that the derivation of $U_K$ can be also thought as a 1- linkoid universe with a number of cusp interactions.

\item
Let $U_K$ be compound. \\
 If $U_K = U_{K_1} \bigoplus U_{K_2}$, then $D(U_K) = D(U_{K_1}) \bigoplus D(U_{K_2})$.\\
 If $U_K=  U_{K_1} \bigoplus [U_{K_2}]$,  then $D(U_K) = D(U_{K_1}) \bigoplus [D(U_{K_2})]$.

\end{enumerate}
See Figures \ref{fig:derivationsimple} and \ref{fig:derivation} for derivations applied on simple universes. 
\end{definition}
\begin{figure}[H]
\centering
\includegraphics[width=.4\textwidth]{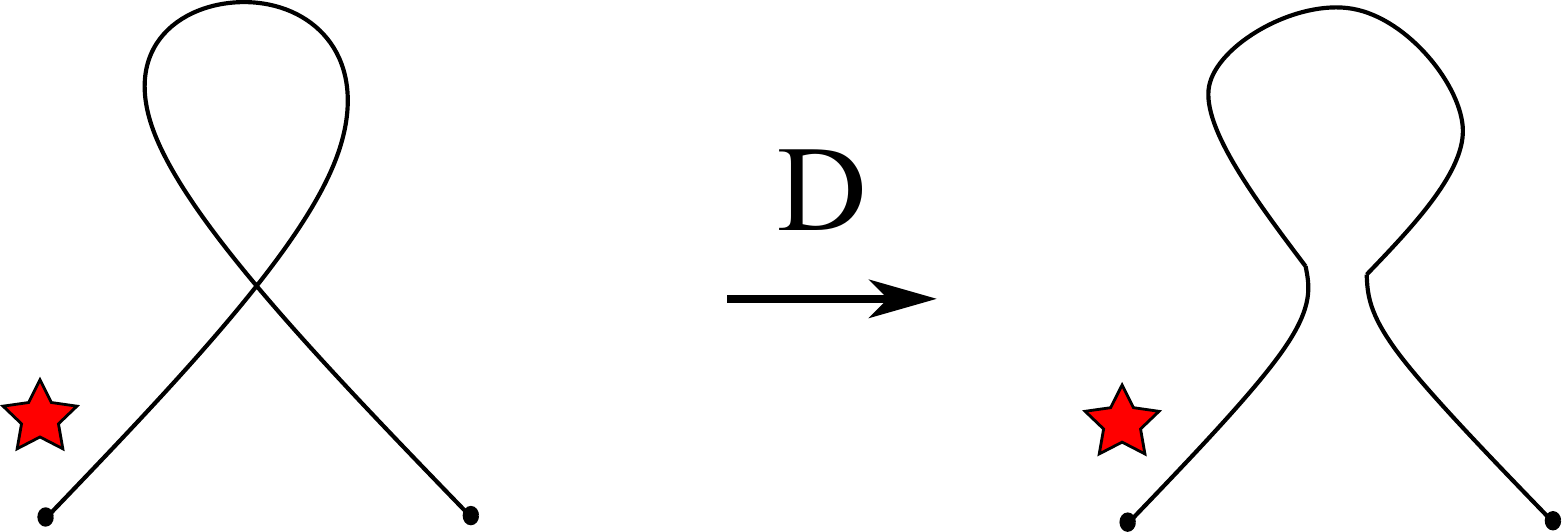}
\caption{The derivation on a curl}
\label{fig:derivationsimple}
\end{figure}

\begin{definition}\normalfont

The composition $D_1\circ D_2$ of two derivations $D_1$, $D_2$ on a non-trivial 1-linkoid universe $U_K$ is defined by smoothing the boundary vertices of $U_K$ firstly and then transforming the star from the exterior region to the adjacent region of the resulting universe. The new starred region is considered to be the exterior region for the knotoid universe obtained and its boundary vertices are determined accordingly for the second derivation. See Figure \ref{fig:derivation} that illustrates the composition of two derivations of a knotoid universe. 

\end{definition}

\begin{definition}\normalfont
Let $U_K$ be a knotoid universe. The $n^{th}$ \textit{derivation} $D^n$ on $U_K$ is the composition of $n$ consecutive derivations applied to $U_K$.

\end{definition}

\begin{proposition}\label{prop:n}
For any knotoid universe $U_K$, there exists $n_{0} \in \mathbb{Z}_{ >0}$ and a shell composition $s$ such that $D^n (U_{K})=  s$ for all $n > n_{0}$ .

\end{proposition}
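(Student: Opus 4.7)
The plan is to establish stabilization by a double induction: first using the recursive definition of $D$ to reduce to simple universes, and then by induction on a complexity measure on simple universes. By the identities
\[
D(U_{K_1} \bigoplus U_{K_2}) = D(U_{K_1}) \bigoplus D(U_{K_2}), \qquad D(U_{K_1} \bigoplus [U_{K_2}]) = D(U_{K_1}) \bigoplus [D(U_{K_2})],
\]
iteration of $D$ commutes with composition and with insertion. Hence if each prime factor and each rider of $U_K$ stabilizes after finitely many derivations to a shell composition, then so does $U_K$, with $n_0$ equal to the maximum of the individual stabilization times and with limit shell composition given by the same composition or insertion applied to the individual limit shells. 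This reduces the problem to the case where $U_K$ is simple.

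For a simple universe $U_K$, I would define the complexity $c(U_K)$ to be the number of vertices of $U_K$ that are neither trivalent vertices of a shell nor cusp interaction sites. A single derivation smooths every boundary vertex of $U_K$ except the endpoint vertex lying in the exterior region, converting each such vertex into a cusp interaction site; then the star is shifted to an adjacent region so that the next derivation acts one step deeper in the dual graph of $U_K$. Since the dual graph of a finite planar universe is finite, this ``peeling'' process strictly decreases $c$ at each step and must terminate after a bounded number of derivations. At that moment the only remaining non-cusp vertices are those arising from the shell structure, and the resulting object is by definition a shell composition $s$.

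Finally, I would verify that once $U_K$ has become a shell composition $s$, further applications of $D$ leave $s$ unchanged. The remaining vertices of $s$ are the trivalent vertices of its component shells, and inspection of the basic shell types in Figure \ref{fig:shell} together with the rule on a curl (Figure \ref{fig:derivationsimple}) shows that smoothing such a trivalent vertex merely redraws the same shell shape, possibly adding further cusp interactions that do not change the underlying shell composition. Taking $n_0$ to be the smallest index at which $D^{n_0}(U_K) = s$ then yields the proposition. The main obstacle will be the middle step: making precise the interaction between smoothing, star-shifting, and the complexity measure, since after each star shift a previously interior vertex can become a boundary vertex in subtle ways. I expect this to require a lexicographic potential function on the pair $(c(U_K),\ \text{depth of deepest unsmoothed vertex})$ together with a case-by-case verification, from the planar structure of $U_K$, that this potential strictly decreases at each derivation until the universe becomes a shell composition.
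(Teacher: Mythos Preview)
Your core strategy agrees with the paper's: termination follows from the finiteness of the vertex set.  The paper's argument, however, is a single sentence and does not use any of your scaffolding.  It simply observes that each application of $D$ converts some boundary vertices into cusp interaction sites and never creates new vertices; since there are only finitely many vertices to begin with, after finitely many steps the only vertices left are those that $D$ is defined to leave alone (the one or two incident to the endpoint in the starred region), and at that point the object is by definition a shell composition and is fixed by $D$.  Your initial reduction to simple universes via the compatibility of $D$ with $\bigoplus$ and $\bigoplus[\,\cdot\,]$ is correct but not needed for this counting argument, and the ``main obstacle'' you anticipate --- tracking how interior vertices become boundary vertices under star-shifting, with a lexicographic potential --- is also unnecessary: the plain count of non-cusp, non-endpoint-adjacent vertices is already monotone non-increasing and eventually zero, because vertices can only be turned into cusps and never back.

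One small correction to your final step: you write that ``smoothing such a trivalent vertex merely redraws the same shell shape, possibly adding further cusp interactions.''  In fact, the trivalent vertices of a shell are precisely the ones incident to the endpoint lying in the starred region, and the definition of $D$ explicitly exempts those from smoothing.  So $D$ does nothing at all to a shell composition --- it does not smooth the trivalent vertices and then happen to recover the same picture; it simply leaves them untouched.  This is why $D^m(s)=s$ exactly, rather than only up to added cusps.
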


\begin{proof}
The derivation operation resolves the vertices of $U_{K}$ that lie on the boundary edges, except the vertices that are incident to the endpoints lying in the starred region, to obtain a simpler universe. Since there are a finite number of vertices, the recursive application of $D$ will stop at $n$ steps for some $n > 0$ when the resulting universe contains no vertices except the ones that are incident to the edges lying in the exterior region, that is, when the initial universe is transformed into a shell composition with interaction sites.  
\end{proof}

In Figure \ref{fig:derivation}, we see that two sequential derivations transform the knotoid universe $U$ into a Type II shell $s$ with a pair of cusp interactions located among its mid-line edge and also among mid-line and loop edge. It is clear that  $D^n (U)= s$ for any $n \geq 2$. 
\begin{figure}[H]
\centering
\includegraphics[width=.6\textwidth]{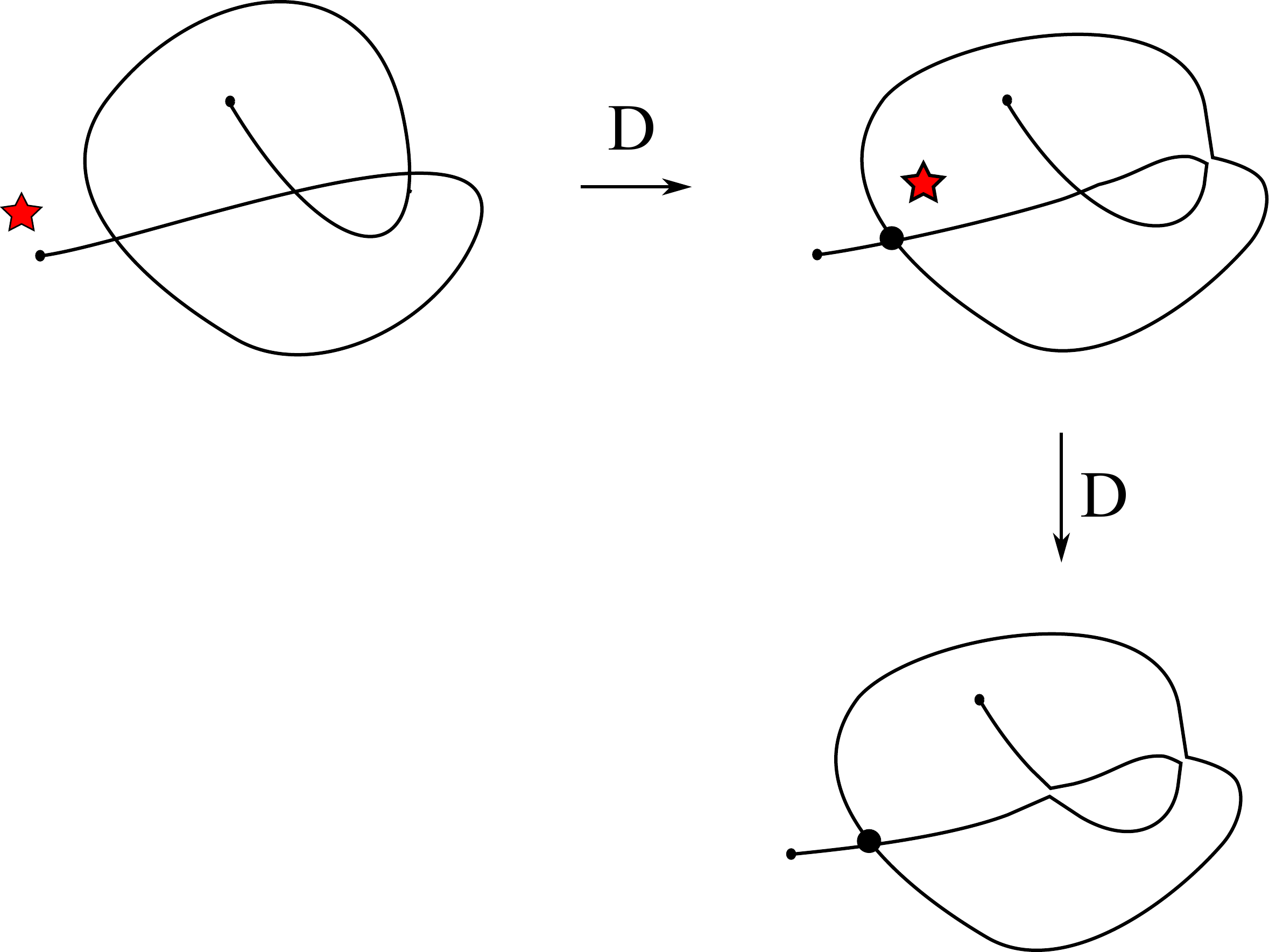}
\caption{The composition of two derivations on $K$.}
\label{fig:derivation}
\end{figure}

\begin{definition}\normalfont

 The \textit{closure} $c(s)$ of a shell composition $s$ is obtained by replacing each cusp interaction with a flat crossing as shown in Figure \ref{fig:closure}.

\end{definition}
\begin{figure}[H]
\centering
\includegraphics[width=.4\textwidth]{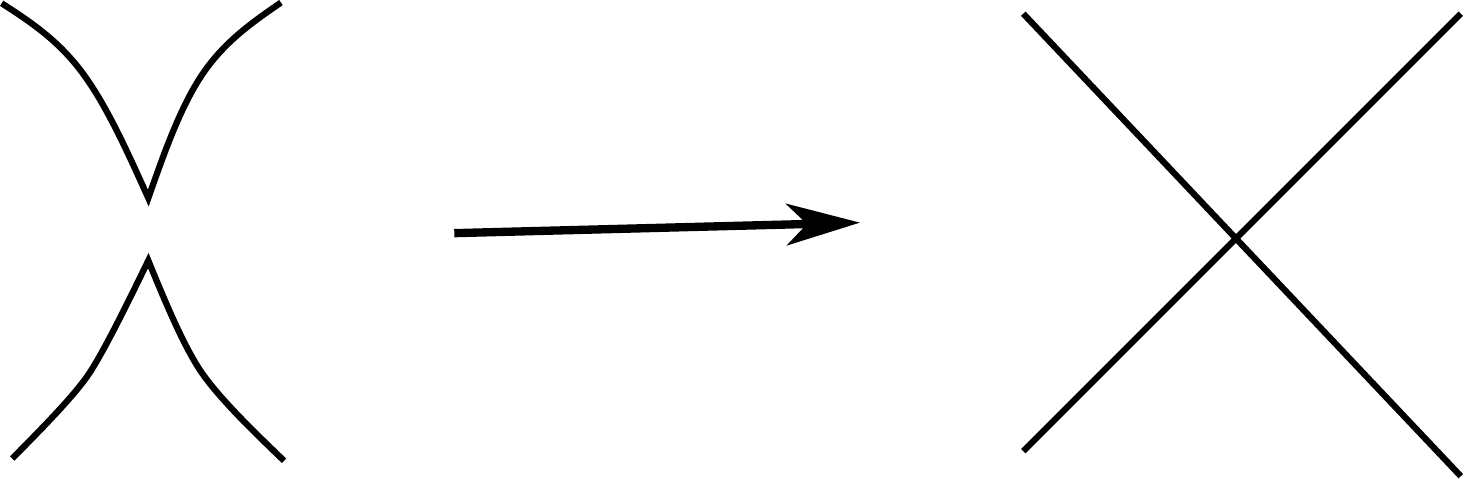}
\caption{Closure of a cusp interaction site}
\label{fig:closure}
\end{figure}

It is clear that closing each interaction site turns any shell composition into a unique 1-linkoid universe. In fact, the closure  is a well-defined mapping from the set of all shell compositions to  the set of all 1-linkoid universes. It is straightforward to see that if $s$ is one of the  type 0, I and II shells, without any cusp interactions, given in Figure \ref{fig:shell} then $c(s)=s$ and $D \circ c (s) = s$.  However, in Figure \ref{fig:nonid}, we observe that the derivation does not necessarily constitute a left inverse for the closure operation. 

\begin{figure}[H]
\centering
\includegraphics[width=1\textwidth]{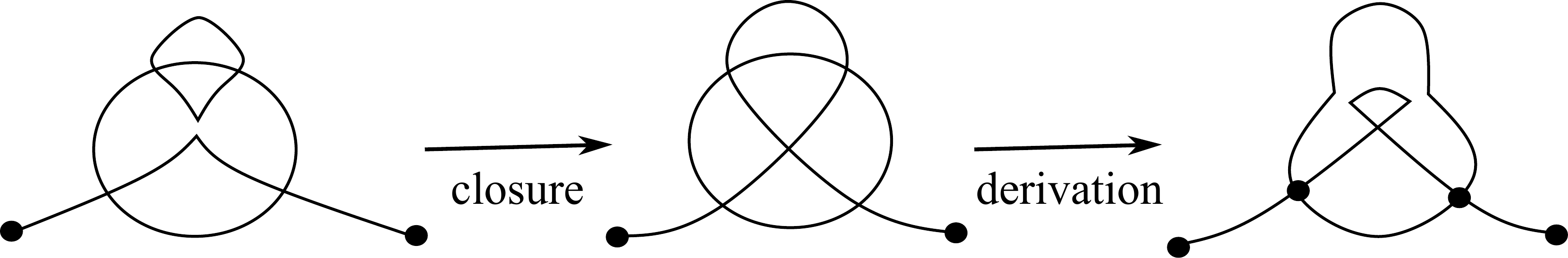}
\caption{}
\label{fig:nonid}
\end{figure}


\begin{definition}\normalfont \label{defn:int}
In the course of derivation, vertices that are smoothed are the ones that are incident to boundary edges or lying on a curl.  We call the resulting interactions \textit{boundary} and \textit{curl type} interactions, respectively.
\end{definition}
Note that the reason that $D \circ c (s) \neq s$ is that the cusp interaction in the shell composition given in Figure \ref{fig:nonid} is neither a boundary not a curl type interaction.






\begin{definition}\normalfont \label{defn:interaction}
Let $K$ be a $1$- linkoid diagram.
A shell composition $s$ is said to \textit{underlie} $K$ if   $c(s) = K$ and $D^{n} (K) = s$, for some $n \in \mathbb{Z}_{>0}$ and  $s$ is a shell composition with $D^{m}(s) =s$ for every $m >n$.


\end{definition}

\begin{proposition}\label{prop:shellcomp}
Every 1-linkoid diagram admits a unique underlying shell composition.
\end{proposition}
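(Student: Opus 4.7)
My plan is to split this into existence and uniqueness, leveraging Proposition \ref{prop:n} for existence and the deterministic nature of derivation for uniqueness.

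For existence, Proposition \ref{prop:n} already furnishes a shell composition $s$ with $D^n(U_K)=s$ for all sufficiently large $n$; I would take this $s$ as my candidate underlying shell composition. Two of the three clauses in Definition \ref{defn:interaction} then follow quickly: the equation $D^n(K)=s$ is exactly what Proposition \ref{prop:n} provides, and $D^m(s)=s$ for $m>n$ holds because $s$, being terminal under $D$, has no non-exterior or curl-type vertex left on which $D$ can act — its only local non-smoothness is in the cusp interaction sites, which $D$ ignores by construction. The content of the argument is in verifying $c(s)=U_K$, which I would establish by induction on the number of derivation steps. At each step $D$ replaces certain boundary or curl-type crossings by cusp interactions having exactly the local form of Figure \ref{fig:closure}, and $c$ is defined to reverse precisely that local picture; crossings not smoothed at a given step persist unchanged. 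Hence $c(D^n U_K)=U_K$ for every $n$, and applying this at the stabilization index yields $c(s)=U_K$.

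For uniqueness I would appeal to the deterministic character of $D$ on universes. On simple universes $D$ is prescribed explicitly; on compound universes it is extended through the decompositions $U_K = U_{K_1}\bigoplus U_{K_2}$ and $U_K = U_{K_1}\bigoplus[U_{K_2}]$. These decompositions are canonical once one identifies the connecting and rider-insertion edges from the planar data: a connecting edge is, by its definition, an interior edge whose removal splits $U_K$ into two non-trivial knotoid universes, and an insertion site is the attachment point of a knot-type rider on a carrier edge. Granting this canonicity, $D^n(U_K)$ is a well-defined universe for each $n$, so any two shell compositions $s,s'$ underlying $K$ must both coincide with $D^n(U_K)$ for sufficiently large $n$ by the stabilization clause, forcing $s=s'$.

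The hard part will be the inductive verification that every cusp interaction introduced by derivation is of the boundary or curl type specified in Definition \ref{defn:int}, so that $c$ is a genuine left inverse of $D$ at these sites; the example in Figure \ref{fig:nonid} demonstrates that $c$ need not invert $D$ for arbitrary cusp interactions, so this restriction is essential and must be tracked at every step of the induction. A secondary, largely bookkeeping, obstacle is confirming canonicity of the prime-plus-rider decomposition of a compound universe, which I would isolate as a short lemma characterizing connecting edges and rider insertion points intrinsically from $U_K$ rather than from any chosen decomposition.
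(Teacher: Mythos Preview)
Your approach is correct and matches the paper's, though you are far more thorough: the paper's proof is two sentences long, citing Proposition~\ref{prop:n} for the stabilizing $s$ and then simply asserting that ``by construction, such shell composition may contain interactions only of the types given in Definition~\ref{defn:int}.'' The paper does not separately argue uniqueness or the clause $c(s)=U_K$; both are left implicit in that one remark.

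Two observations on where you over- and under-shoot. First, what you flag as ``the hard part'' --- that every cusp produced by derivation is of boundary or curl type --- is essentially tautological: Definition~\ref{defn:int} \emph{defines} boundary and curl interactions to be those arising from smoothing a boundary vertex or a curl vertex, and the derivation operator by its very description only ever smooths such vertices. No induction is needed; the observation is immediate, which is why the paper dispatches it with ``by construction.'' Second, your concern about canonicity of the prime-plus-rider decomposition is legitimate and is something the paper does not address at all. If you want a fully rigorous argument you will indeed need the short lemma you propose, but be aware that the paper simply takes this for granted.
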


\begin{proof}
By Proposition \ref{prop:n}, we know that there is a sequence of derivations that transforms a 1- linkoid diagram into a shell composition.  Then by construction, such shell composition may contain interactions only of the types given in Definition \ref{defn:int}.

\end{proof}

\begin{definition}\normalfont
Let  $s$ be the shell composition underlying a 1-linkoid diagram $K$. The endpoint of $s$ that corresponds to the tail of $K$ is called the \textit{tail} of $s$. The endpoint of $s$ that corresponds to the head of $K$ is called the \textit{head} of $s$.

\end{definition}



\subsubsection{An algorithm for obtaining a clocked state: Shelling algorithm}

Let $s$ be the shell composition underlying a 1- linkoid diagram $K$. We can always assume that the tail of $s$ lies on the left-hand side of every shell of $s$ and  at the exterior region.

We place markers as shown in Figure \ref{fig:int} at  interaction sites in $s$. Let us explain the rule of placing the markers.
\begin{figure}[H]
\centering
\includegraphics[width=.6\textwidth]{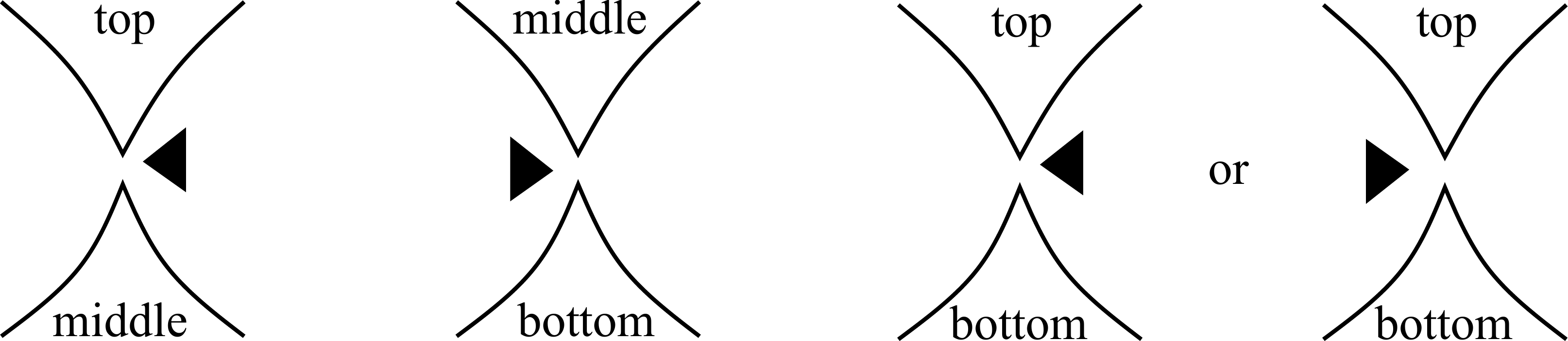}
\caption{Placement of state markers at interaction sites.}
\label{fig:int}
\end{figure}

\begin{enumerate}

\item If an interaction site lies between top-line and mid-line, we place a marker at the right-hand side of the interaction site.

\item If an interaction site lies between bottom-line and mid-line, we place a marker at the left-hand side of the interaction site.
\item If an interaction site lies between top-line and bottom-line, we place a marker either  at the left-hand side or right-hand side of the interaction site. 
\item If an interaction site is a self-interaction site of either top-line or mid-line or bottom-line, then we place a marker at the \textit{inside} of the interaction site that corresponds to the bounded region enclosed after the closure of the site.
\end{enumerate}

 This placement of the state markers on $s$ induces a state in the closure of $s$, that is on $K$, by considering  markers added at interaction sites of $s$ as the state markers at crossings in $K$.  See Figure \ref{fig:shellex} for an example. Note that the interaction site that is incident to the exterior region can be considered to lie between either top-line and mid-line or bottom-line and mid-line. 
 
 In Theorem \ref{thm:existence} we show that the induced state always corresponds to a clocked state of $K$. Note that in the shelling algorithm if we swap the directions for the markers that are placed, we obtain a counter-clocked state for $K$, and in fact the algorithm always produces a counter-clocked state as also follows by Theorem \ref{thm:existence}. 
 
\begin{figure}[H]
\centering
\includegraphics[width=.6\textwidth]{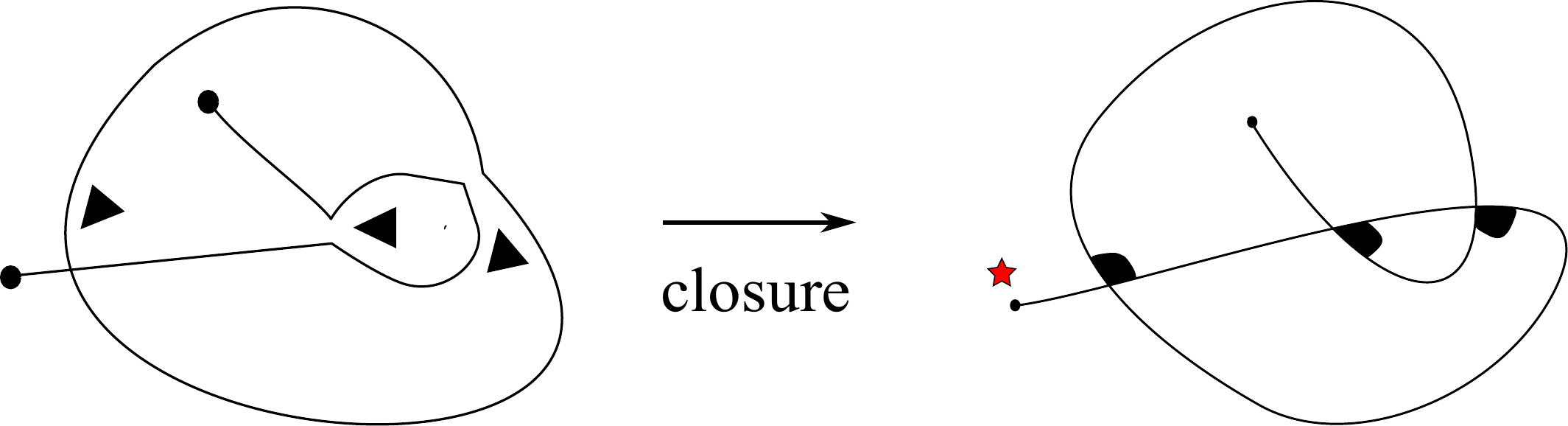}
\caption{Shelling of $	K$ and the corresponding state.}
\label{fig:shellex}
\end{figure}

\begin{theorem}[Existence of extreme states] \label{thm:existence}

Every 1-linkoid universe admits a clocked and a counter-clocked state.
\end{theorem}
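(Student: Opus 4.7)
The plan is to prove the theorem constructively by showing that the shelling algorithm, applied to the shell composition underlying $U_K$ and then propagated through closures, produces a clocked state; the counter-clocked state is obtained by reversing each of the marker placement rules (1)--(4). I would proceed by strong induction on the minimum number $n$ of derivation steps needed to reduce $U_K$ to its underlying shell composition $s$, which is well-defined by Propositions \ref{prop:n} and \ref{prop:shellcomp}.

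For the base case $n=0$, when $U_K$ is itself a shell composition, I would verify directly by case analysis on shell type (Types 0, I, II) and on the four types of interaction sites that the marker placements specified by rules (1)--(4) give a valid clock state — one marker per bounded region at an incident interaction site. I would then check that each pair of markers on adjacent regions, as well as the marker incident to the head, sits in the clockwise configuration of Figure \ref{fig:clock2}: the rules are designed precisely so that markers on the top-line/mid-line boundary point right, those on the bottom-line/mid-line boundary point left, and self-interaction markers point inward, making every admissible exchange a clockwise rotation. Extending to pure shell compositions and to compound compositions then follows from the associativity of $\bigoplus$ and the fact that the insertion of a rider does not affect the rotational sense of markers outside its image.

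For the inductive step, I would apply the induction hypothesis to $D(U_K)$, which has minimum derivation length $n-1$ and therefore admits a clocked state $S'$. The passage from $D(U_K)$ back to $U_K$ reverses one derivation step, replacing certain cusp interactions by crossings (Figure \ref{fig:closure}). The shelling rules place the new markers at each reinstated crossing in a position that, together with $S'$, still forms a valid clock state, and every induced pair of adjacent markers — whether both new, both old, or one of each — sits in the clockwise configuration. The compositional clauses $D(U_{K_1}\bigoplus U_{K_2})=D(U_{K_1})\bigoplus D(U_{K_2})$ and the rider clause localize this verification to simple universes, where it reduces to the base case by construction.

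The main obstacle will be the carrier/rider interaction. When a rider $U_2$ is inserted into an interior edge of a carrier $U_1$, the starred exterior region of $U_2$ (as a standalone diagram) becomes an interior region of the composite, and the shelling rules (1)--(3) for interactions on $U_1$ must be checked for compatibility with rule (4) for self-interactions along the shared boundary. A careful case analysis of how markers on $U_1$ adjacent to the insertion region relate to markers on the boundary of $U_2$ will be required to ensure that no newly created pair of adjacent markers forms a counter-clockwise configuration; this is the crux of the argument, and the symmetry of rules (1)--(4) under direction reversal will immediately yield the counter-clocked counterpart.
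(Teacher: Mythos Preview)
Your approach is the same as the paper's in spirit---use the shelling algorithm and argue inductively that it produces a clocked state---but your induction is mis-specified in a way that creates a real gap. You want to apply the induction hypothesis to $D(U_K)$, asserting that it ``has minimum derivation length $n-1$ and therefore admits a clocked state $S'$.'' But $D(U_K)$ is not a 1-linkoid universe: a single derivation step replaces the boundary crossings by cusp interactions, so $D(U_K)$ is a hybrid object carrying both cusps and (on its mid-line) genuine $4$-valent crossings. The theorem as stated is about 1-linkoid universes, so the hypothesis does not apply to $D(U_K)$ unless you first strengthen the statement to cover such hybrids and say what ``clocked'' means for them. Your base case has the same issue: a 1-linkoid universe $U_K$ with $D(U_K)=U_K$ has no crossings at all, so the only honest $n=0$ case is the trivial universe, and the case analysis on shell types and interaction types you place there is really about hybrid objects outside the scope of the statement you are inducting on.

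The paper avoids this by inducting on the number of vertices rather than on derivation length. After one derivation the \emph{mid-line} of the resulting shell is itself a genuine 1-linkoid universe with strictly fewer vertices, and the induction hypothesis applies to it directly; one then fills in markers at the newly created outer interaction sites via the shelling rules. The verification that no counter-clockwise move appears is also much shorter than the configuration-by-configuration check you outline: any counter-clockwise move at one of the outer interaction sites would carry its marker into the starred exterior region, which by construction never receives a marker, so no such move exists. That single observation replaces your planned case analysis and also absorbs the carrier/rider compatibility issue you flag as the main obstacle, since the compositional clauses for $D$ already localize the outer layer to simple pieces.
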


\begin{proof}
The proof proceeds by induction on the number of vertices of the universe of a 1-linkoid diagram.

If $U_K$ is the trivial universe then the statement holds directly since there is no state that can be assigned to $U_K$ . 

If $U_K$ is a curl or a curl composite, then it is clear that there is a unique state of $U_K$ which is obtained by placing state markers at  bounded regions of curls in $U_K$. This state is conventionally assumed to be one of the extremal (clocked or counter-clocked) states of $U_K$.

If $U_K$ admits one or two vertices and it is not a curl composite, then it is not hard to see the shelling algorithm induces a clocked state.  See Figure \ref{fig:split} for some of the 1-linkoid universes with at most two crossings, with the shelling algorithm applied to them. 

\begin{figure}[H]
\centering
\includegraphics[width=.95\textwidth]{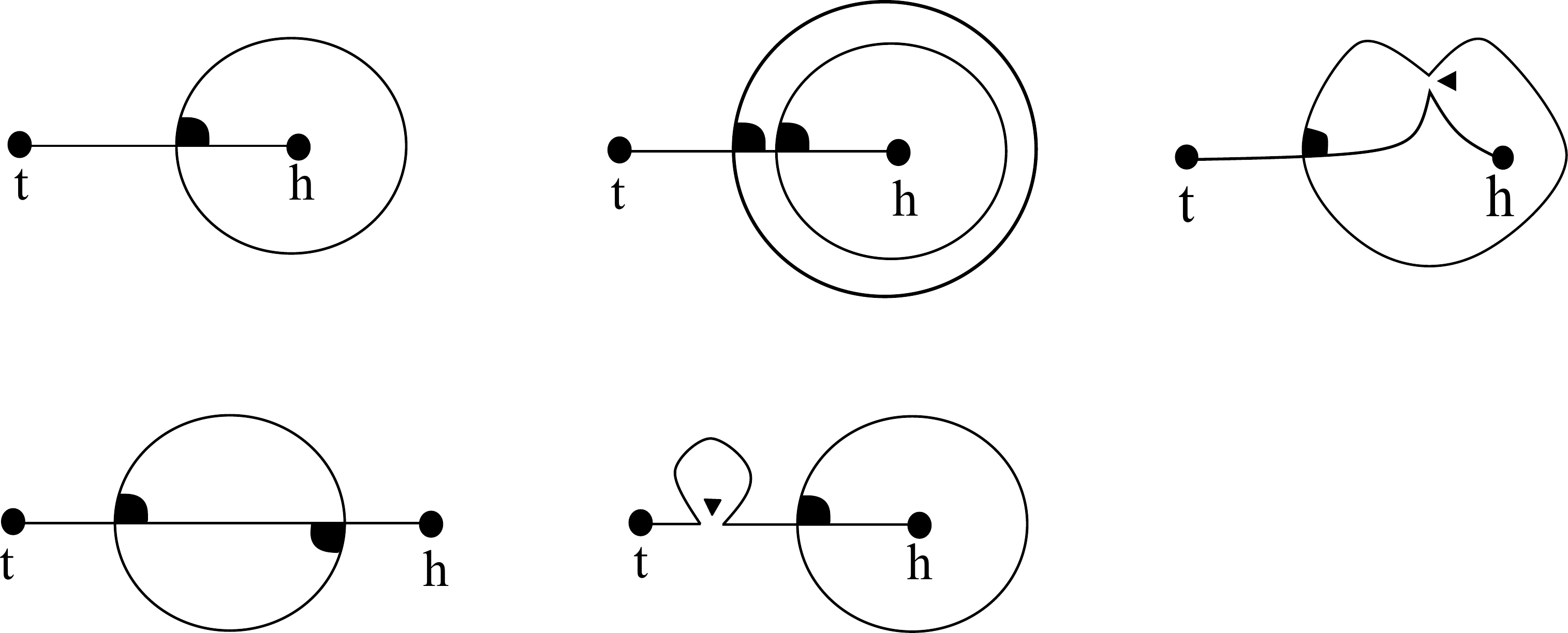}
\caption{Some shells that results from 1-linkoid universes with at most two vertices. Note that $t,h$ denote the tail and head of the universes.}
\label{fig:split}
\end{figure}

Now assume that the statement holds for all 1-linkoid universes with less than or equal to $n$ vertices. Let $U_K$ be a 1-linkoid universe with $m$ vertices where $m = n+1$.

By deriving $U_{K}$ one time,  we obtain an outer shell whose mid-line structure is indeed a knotoid universe with a number of vertices that is less than the number of vertices of $U_K$ and with a number of interaction sites that lie on top-line, mid-line or bottom-line of the resulting shell.  By the induction hypothesis, we know that there is a clocked state for the knotoid universe that forms the mid-line part. We place the state markers at the vertices of the knotoid universe so that we get a clocked state. 

We then place state markers at the resulting interaction sites according to the rules of the shelling algorithm. We observe that there is no available clock move in the counter-clockwise direction that results from an interaction of these markers with the markers that lie on vertices of the mid-line knotoid universe. The reason for this is that any clock move in the counter-clockwise direction would take a (state) marker at an interaction site to the starred region, and the starred region does not get any state marker by the construction.

This observation completes the proof of the existence of extremal states.

\end{proof}

 \begin{lemma} (Removal Lemma)\label{thm:removal}
  Let $U$ be a 1 - linkoid universe and $s$ be a clocked state of $U$.  
 Then any state obtained by smoothing out a crossing of $U$ in the direction of the state marker at that crossing, is a clocked state.
 \end{lemma}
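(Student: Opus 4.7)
The plan is to argue by contradiction, using the correspondence between clock states and trails (Corollary \ref{cor:statesandtrails}) together with the identification of clock moves with single or double exchanges (Proposition \ref{prop:statemoves}). First I would let $T$ be the trail on $U$ determined by $s$, and observe that smoothing $c$ in the direction of $m_c$ realizes the smoothing site that $T$ already has at $c$; the remaining sites of $T$ therefore produce a trail $T'$ on the smoothed universe $U'$, and the state $s'$ corresponding to $T'$ is exactly the state obtained from $s$ by deleting the marker $m_c$. That $s'$ is a valid clock state on $U'$ follows from the fact that smoothing in the direction of the marker joins two distinct regions of $U$ (as used in the proof of Proposition \ref{prop:Jordan}), so $s'$ assigns exactly one marker to each bounded region of $U'$.

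Next, suppose for contradiction that $s'$ admits a counter-clockwise clock move on $U'$ at some crossings $c_i$ (and possibly $c_j$) of $U'$. By Proposition \ref{prop:statemoves} this corresponds to a counter-clockwise single or double exchange on $T'$ at those crossings. Since $c$ is not a crossing of $U'$, the exchange sites are crossings of $U$ distinct from $c$, so one can perform the same exchange on $T$ while leaving the smoothing at $c$ untouched, obtaining a trail $T''$ on $U$ and, via Corollary \ref{cor:statesandtrails}, a state $\tilde{s}$ on $U$ that differs from $s$ only at those crossings. When the shared edge of the exchange in $U'$ is already an edge of $U$, the local configuration around the exchange is identical in $U$ and $U'$, so $\tilde{s}$ arises from $s$ by a counter-clockwise clock move, contradicting the assumption that $s$ is clocked.

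The main obstacle is the remaining case, in which the shared edge of the exchange in $U'$ is a new edge formed by joining two edges $e_1, e_2$ of $U$ at $c$. Here the lift is not literally a clock move at $(c_i, c_j)$ in $U$, and the contradiction must instead come from a counter-clockwise clock move in $U$ involving the marker $m_c$ at $c$ itself, such as one at $(c_i, c)$ or $(c, c_j)$. The key point is that ``smoothing in the direction of $m_c$'' fixes which pair of corners of $c$ merges in $U'$ and hence the cyclic ordering of $m_c$ relative to $m_i$ and $m_j$. I expect a careful case analysis of the four corners of $c$ together with the possible positions of $m_c$ to show that any counter-clockwise exchange through the new edge in $U'$ forces at least one counter-clockwise clock move involving $c$ in $U$; this delicate local argument at $c$ is the most technical step of the proof.
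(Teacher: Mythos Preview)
Your overall strategy---assume a counter-clockwise clock move on $s'$ and lift it to a counter-clockwise move on $s$---is exactly the paper's argument. The paper, however, never passes through the trail/exchange correspondence: it works directly with markers and regions. Since clock moves are defined at the level of states, the detour through Corollary~\ref{cor:statesandtrails} and Proposition~\ref{prop:statemoves} buys you nothing and makes your ``hard case'' look harder than it is. In the marker language the dichotomy is simply: either the putative counter-clockwise move in $U'$ involves only regions and markers away from the smoothed site $c$ (then the identical move is available in $U$), or it involves the merged region $R_i\cup R_j$ and a marker adjacent to $c$.

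For the second case the paper does not perform the trail-theoretic analysis you anticipate; it draws the local picture (Figure~\ref{fig:removeya!}) and observes that the marker $m_c$ occupies one of the two regions $R_i$, $R_j$ being merged, so a counter-clockwise move in $U'$ touching $R_i\cup R_j$ is realized in $U$ as a counter-clockwise move touching $R_i$ or $R_j$, using $m_c$ itself as one of the two rotating markers when necessary. The paper's treatment of this point is brief and relies on the figure, so your instinct that the local configuration at $c$ is where the work lies is correct; but the argument is cleaner if you stay with markers and regions and track which of $R_i$, $R_j$ carries $m_c$, rather than translating into exchanges on trails and then back again.
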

 
 \begin{proof}
In Figure \ref{fig:removeya!} we illustrate two generic cases of a clocked state of $U$ locally, one of which contains an endpoint adjacent to the crossing removed.  On the right-hand side pictures of the figure we see local parts of a smaller universe with the crossing $c$ removed in the direction of the given state marker at $c$. Note that the regions $R_i$ and $R_j$ are distinct regions as noted in the proof of Proposition \ref{prop:Jordan}.

Upon removal of $c$, the edges $e_1$ and $e_2$ get connected with the $e_4$ and $e_3$, respectively, and the region $R_i$ gets connected with the region $R_j$. We see that the region $R_j$ receives a marker at the bottom most crossing and so all the regions of the resulting universe $U - \{c\}$ receive a state marker. If $R_j$ does not get a marker at the bottom most crossing as in the pictures, it is guaranteed that $R_j$ gets marked at another crossing because it is already a region of the clocked state of $U$ we have begun with. This means that the remaining state markers determine a state for the universe $U - \{c\}$. 

By Proposition \ref{prop:statemoves}, any state admits a clockwise or counter-clock wise clock move  since any state determines a trail and any trail admits a single or double exchange sites. 

If there is a clock move in the counter-clock wise direction available for the resulting state, then it is either a clock move that moves a single or a pair of state markers that lie outside of this local part of the universe  $U - \{c\}$, or it moves a state marker of $U - \{c\}$ that appears in the figure. Both cases are not possible since any such clock move would induce a clock move in the counterclockwise direction in the initial state. Therefore, the resulting state of $U - \{c\}$ is a clocked state. 
 
This figure can be considered as a generic picture of a local configuration of any clocked state and the observation we made above. 

\begin{figure}[H]
\centering
\includegraphics[width=.6\textwidth]{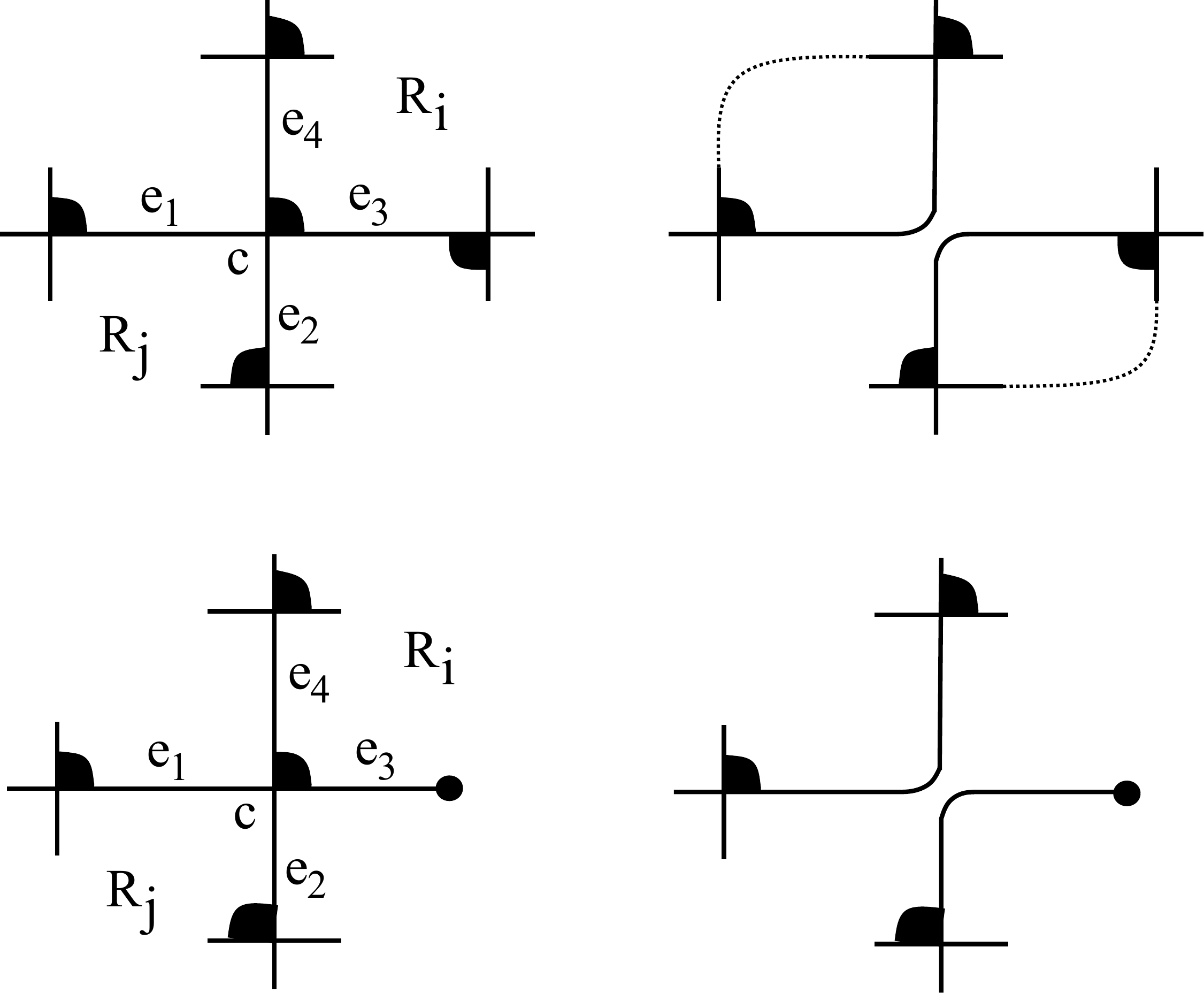}
\caption{Two types of a local picture of a clocked state. }
\label{fig:removeya!}
\end{figure}

  \end{proof}

\begin{theorem}[Uniqueness of Extreme States]\label{thm:unique}
The extreme states (clocked and counter-clocked) of a 1- linkoid universe are unique. The extreme states are obtained  by the shelling algorithm given  in the proof of the existence of clocked states.  \end{theorem}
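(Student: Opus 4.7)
The plan is to mirror the inductive architecture used for existence, but run it in reverse: given an arbitrary clocked state $s$ of a $1$-linkoid universe $U$, I will show that $s$ must agree with the output of the shelling algorithm by inducting on the number of vertices of $U$. The counter-clocked case is identical after swapping orientations throughout, so I will only discuss the clocked case.

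\textbf{Base cases.} If $U$ is trivial, a curl, or a curl composite, there is at most one state available (the state markers at curls are forced to sit in the bounded region of each curl), so uniqueness is automatic and the shelling algorithm returns exactly this state. For universes with one or two vertices that are not curl composites, I would verify directly, using the small list of cases already pictured in Figure \ref{fig:split}, that the shelling algorithm's output is the only placement of markers admitting no counterclockwise clock move.

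\textbf{Inductive step.} Suppose the uniqueness holds for all $1$-linkoid universes with at most $n$ vertices, and let $U$ have $n+1$ vertices. Let $s$ be any clocked state of $U$. Apply one derivation $D$ to $U$ to expose an outer shell whose top-line, bottom-line and mid-line edges carry a collection of interaction sites, and whose mid-line contains a smaller $1$-linkoid universe $U'$. The heart of the argument is to show that the markers $s$ assigns at the boundary vertices of $U$ (those smoothed by $D$) are \emph{forced} to coincide with the placements prescribed in items (1)--(4) of the shelling algorithm. For a boundary vertex $v$ sitting between top-line and mid-line, any marker placed in the left-hand local region would be movable clockwise \emph{towards} the starred exterior region and hence counterclockwise in the sense of the clock moves; a careful case analysis of the four local pictures (top/mid, bot/mid, top/bot, self-interaction) shows that the shelling placement is the unique choice that avoids a counterclockwise clock move at $v$, because every other local assignment produces a marker pair (with its clockwise/counter-clockwise partner across a shared boundary edge) admitting the forbidden move.

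\textbf{Reduction via Removal.} Once the markers at the shell vertices are pinned down, I apply Lemma \ref{thm:removal} repeatedly, smoothing out each such boundary crossing in the direction of its (now forced) marker. By the Removal Lemma, the resulting state $s'$ on the reduced universe $U''$ is still clocked. Since $U''$ has strictly fewer vertices than $U$, the inductive hypothesis gives that $s'$ equals the shelling-algorithm state on $U''$. Reassembling $s'$ with the forced boundary markers recovers $s$ on one side and the shelling algorithm output on $U$ on the other, so the two agree.

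\textbf{Main obstacle.} The delicate step is the forcing argument for markers at the shell vertices: I need to argue that every alternative placement exposes a counterclockwise clock move \emph{somewhere} in $U$, not merely locally. The subtlety is that a marker at a shell site interacts with markers in the mid-line universe $U'$ across the mid-line edge, so I must rule out the possibility that the inductive hypothesis on $U'$ lets a non-standard choice at the boundary ``absorb'' the bad move. Handling this will likely require tracking how a hypothetical counterclockwise clock move at the boundary would propagate into $U'$ and invoking the forbidden-move structure already used in the existence proof, together with the observation that the starred (exterior) region never carries a marker, which blocks any counterclockwise rotation that would push a marker out of a bounded region into the star.
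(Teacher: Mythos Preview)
Your proposal follows essentially the same architecture as the paper's proof: induction on the number of vertices, a single derivation to expose the outer shell and its mid-line universe, the Removal Lemma to pass from the given clocked state $s$ to a clocked state on the smaller universe, and the inductive hypothesis there. The only difference is ordering --- you front-load the forcing of the boundary markers before removing them, whereas the paper removes first (in the direction dictated by $s$) and compresses the forcing into the single clause ``putting back the removed crossings, there is only one possible placement for the markers at the crossings to have a clocked state''; your flagging of this step as the main obstacle is apt, since the paper leaves it at that level of detail as well.
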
 
\begin{proof}
We prove the uniqueness of the extremal states (clocked and counter-clocked states) by induction on the number of vertices of universes. 

Assume first a 1-linkoid universe that is either a curl or a curl composite.  That is, the underlying trail is of curl type. 
We already discussed in the proof of Theorem \ref{thm:existence} that such a universe  admits a unique state that is both clocked and counter - clocked by convention.

Now assume that any 1-linkoid universe with at most $n$ crossings admits a unique clocked and counter-clocked state.  Let $U$ be a 1-linkoid universe with $n+1$ crossings, and $s$ be an extremal state of $U$.

 Without loss of generality, let us assume $s$ is a clocked state. By applying a single derivation on $U$, we obtain a shell composition. The midline of the resulting shell composition is clearly a universe itself and has less crossings than $U$. By the removal lemma we know that removing a crossing from $U$ induces a clocked state  $\tilde{s}$ on the midline universe.
  Then by the induction hypothesis, it follows that $\tilde{s}$ is the unique clocked state on the midline universe.  Putting back the removed crossings,  there is only one possible placement for the markers at the crossings to have a clocked state and this state is indeed the initial state $s$ since the initial and terminal vertices (the vertices adjacent to the endpoints) are left during the derivation.

\begin{corollary}\label{cor:shelling}
The shelling algorithm induces the unique clocked trail.
\end{corollary}

\end{proof}

By the observations above, we can say that a clocked (and a counter-clocked) trail of a 1-linkoid universe that is not a curl has generically three main parts that we call \textit{top}, \textit{mid-line} and \textit{bottom} parts. See Figure \ref{fig:parts}.

\begin{figure}[H]
\centering
\includegraphics[width=.3\textwidth]{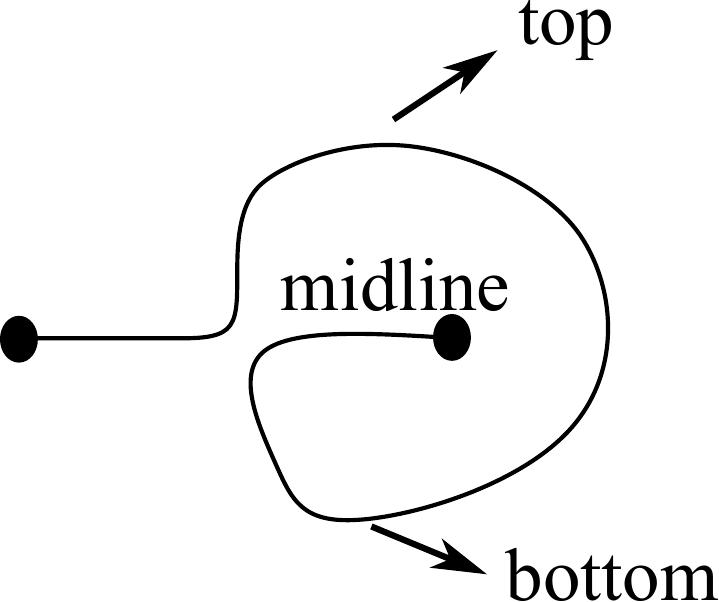}
\caption{Parts of a clocked trail}
\label{fig:parts}
\end{figure}


The following statement  tells that a clockwise or counterclockwise (single or double ) exchange on a trail of a 1- linkoid factorizes into a series of clock moves in the clockwise or counterclokwise direction, respectively on the corresponding states of the linkoid. The statement indeed generalizes the classical case where classical link diagrams are considered. We refer the reader to \cite{FKT} for the proof of the classical case statement.

%
 
\begin{definition}\normalfont

A \textit{trail-rider} is a trail $\tau$ that is inserted in an arc  $\alpha$ of some given trail $T$ for a  1-linkoid universe. The inserted trail $\tau$ may also have cusp interactions with cusps inserted in that trail $T$ outside of the arc $\alpha$. These extra cusps, if removed, will produce an insertion of a trail in $T$, so that the closure of the new larger trail $T \bigoplus \tau$ has a rider corresponding to this trail insertion. With the extra cusps, the closure  using $\tau$ may not necessarily produce a rider. We say that $\tau$ is an \textit{involved} trail-rider if no rider is produced in the closure.  \\

A trail is \textit{atomic} if it has no trail-riders.

 \end{definition}


 
 

\begin{theorem}\label{prop:clockprop}(Clock Theorem Extended)
Let  $T_1$ and $T_2$ be two trails of a 1-linkoid universe $U$ obtained by smoothing the vertices of $U$ according to the position of markers in any two states $s_1$ and $s_2$ of $U$, respectively. Assume that $T_2$ is obtained from $T_1$ by one exchange (clockwise or counterclockwise). Then, there is a sequence of clock moves applied in the clockwise direction (or counterclockwise direction, respectively) to transform the state $s_1$ to the state $s_2$. Clock moves are utilized as follows. In the statements below, riders will be always trail-riders, and we refer to markers in relation to the trails.


\begin{enumerate}
\item State markers that lie on trail riders on the top or bottom parts do not turn at all.
\item State markers that lie at the exchange sites turn by $90$ degrees.
\item If state markers do not lie at the exchange sites but  lie between the mid-line and the top or bottom then they turn a total of $180$ degrees clockwise.
\item If $A$ is a rider in the decomposition of the mid-line trail, and if $A$ has a cusp rider, then every marker on $A$ turns. Otherwise $A$ is \textit{uninvolved}, that is, state markers on it do not turn at all.
State markers at self-interaction sites of the mid-line trail that lie on an involved atom turn 360 degrees in the given clock direction.


\item Let $\tilde{C}$ be a composition of atoms that lie on the mid-line. Suppose that $A$ and $B$ are atoms that belong to $\tilde{C}$, and let $A$ ride on the atom $B$. If every marker on $A$ turns, then every marker on $B$ turns, too.
 \end{enumerate}
 
Below is an illustration of the extended clock theorem in Figure \ref{fig:swapp} where we enumerate the clockwise clock moves taking place during an exchange. 
 
\begin{figure}[H]
\centering
\includegraphics[width=1\textwidth]{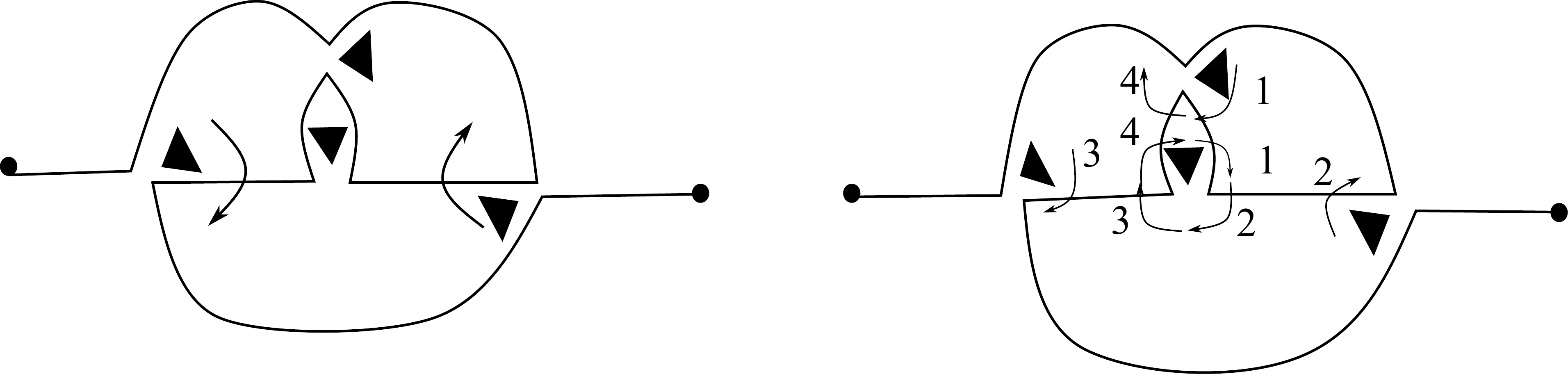}
\caption{}
\label{fig:swapp}
\end{figure}


\end{theorem}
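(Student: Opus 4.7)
The plan is to induct on the structural complexity of $U$ as measured by its underlying shell composition, which exists and is unique by Proposition \ref{prop:shellcomp}. For the base case, consider a universe whose shell composition is a single shell with a single interaction site: here $T_1$ and $T_2$ differ at one vertex $v$, and the required clock move is exactly the $90$-degree rotation of the state marker at $v$, realizing rule (2) and (when $v$ is incident to the head of $U$) the second form of clock move shown in Figure \ref{fig:clock2}. For the inductive step, apply one derivation to strip off the outer shell of $U$. Its mid-line is a sub-universe with strictly fewer vertices, so either the exchange is supported entirely inside the mid-line, in which case the inductive hypothesis yields the required clock-move sequence verbatim, or it involves the outer shell's top/bottom lines and their trail-riders, which we must analyze by hand.

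In the latter case, localize the analysis at the exchange site and then work outward along the rider tree attached to the outer shell. Rule (2) is immediate from the definition of a clock move. Rules (1) and (3) follow from the shelling algorithm: markers on trail-riders sitting on top or bottom parts that are not adjacent to the exchange site are not affected, giving rule (1); markers in the strip between the mid-line and the top or bottom that are not themselves at the exchange site are each forced through two consecutive $90$-degree rotations in the same clock direction, first to accommodate the re-smoothing of a neighboring site and then to preserve the marker/region bijection, yielding the $180$-degree total in rule (3). For rule (4), if a rider $A$ on the mid-line carries a cusp, then resmoothing one of its sites forces the internal trail of $A$ to reorient entirely, so every marker on $A$ must turn, and a self-interaction site on $A$ must make a full $360$-degree rotation in a fixed clock direction. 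Rule (5) then follows by iterating this observation along the carrier--rider hierarchy, using that a rider $A$ riding on $B$ transfers its reorientation to $B$ through the interaction sites joining them.

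The main obstacle is controlling the direction of the cascade in rules (4) and (5): one must show that the induced sequence of clock moves is monotonically clockwise (respectively counterclockwise) and never requires an opposite-direction adjustment at an intermediate step. My plan is to order the clock moves according to the shelling order of Theorem \ref{thm:unique}, starting at the exchange site and propagating outward along the rider hierarchy. At each step, the uniqueness of the clocked and counter-clocked states (Theorem \ref{thm:unique}) together with the Removal Lemma (Lemma \ref{thm:removal}) rules out inconsistent intermediate placements, and the inductive hypothesis applied to each atomic sub-trail supplies the needed local clockwise (or counterclockwise) sequence. A final check, which I expect to be routine but tedious, is that the $180$- and $360$-degree rotations in rules (3) and (4) can each be factored into genuine two-marker or head-incident clock moves on legitimate intermediate states, so that the entire cascade is realized by the clock moves defined in Section 2.
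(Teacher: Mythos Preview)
Your inductive scheme has a structural mismatch with the statement. The ``top / mid-line / bottom'' decomposition in rules (1)--(5) is taken \emph{relative to the exchange sites of $T_1$}, not relative to a derivation of $U$: the exchange picks out one or two smoothing sites, and the mid-line is the portion of the trail enclosed between them. Stripping off the outermost shell of $U$ by a single derivation does not in general isolate this mid-line, so your dichotomy ``exchange supported entirely inside the mid-line vs.\ involving the outer shell'' is not the right split. In particular, even when the exchange sites happen to sit inside your derived sub-universe, the factorization into clock moves need not stay there --- rules (3)--(5) describe rotations that propagate outward --- so the inductive hypothesis cannot be applied ``verbatim''. The paper avoids this by removing \emph{one} carefully chosen cusp or self-interaction site (one adjacent to an exchange site, or one lying on an involved rider), applying the induction hypothesis to the trail with that single site deleted, and then checking by direct local analysis that plugging the site back in extends the clock-move factorization with the correct $180^\circ$ or $360^\circ$ contribution. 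That local plug-in analysis is the heart of the proof, and your ``analyze by hand'' clause does not supply it.

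There is also a misuse of earlier results. Lemma~\ref{thm:removal} and Theorem~\ref{thm:unique} are statements about \emph{clocked} states: removing a crossing from a clocked state yields a clocked state, and the clocked state is unique. But $s_1$ and $s_2$ here are arbitrary states related by a single exchange, not extremal ones, so neither result controls the intermediate states you produce, and they cannot be used to rule out an opposite-direction move appearing in the cascade. The monotonicity of the clock direction has to come instead from the local geometry at the removed site, which is what the paper's case analysis establishes.
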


\begin{proof}

We prove the statement by inducting on the complexity of the trail of a 1-linkoid diagram. The complexity we concern here is about the types and the number  of interaction sites of a trail.
The statement can be verified directly for 1-linkoid diagrams that have one or two crossings. 

Suppose first that $T_1$  admits some number of cusps between the mid-line trail and top or bottom lines as interaction sites. Clearly, some of these cusps are adjacent to exchange sites in $T_1$. An illustration for this is given in Figure \ref{fig:swapp} where we see only one interaction site between the mid-line and the top line, and it is adjacent to both exchange sites. 
 


We remove one of the  cusps that is adjacent to an exchange site.   Without loss of generality we can assume that the cusp we remove lies between the top-line and the mid-line. There are in fact two cases for the removed cusp: The removed cusp may also be adjacent to some another top-line cusp as in top left of Figure \ref{fig:cuspremoval} or it may be adjacent to a cusp that lies between bottom-line and mid-line, as illustrated in bottom left of Figure \ref{fig:cuspremoval}. The figures illustrate how starting moves in the non-removed configuration transform into moves in the removed configuration. In each case we see that if we know by induction that the process continues as in the hypothesis of the theorem for the removed configuration, then the process will extend to the configuration with the cusp plugged in. This completes the induction step.
 
\begin{figure}[H]
\centering
\includegraphics[width=1\textwidth]{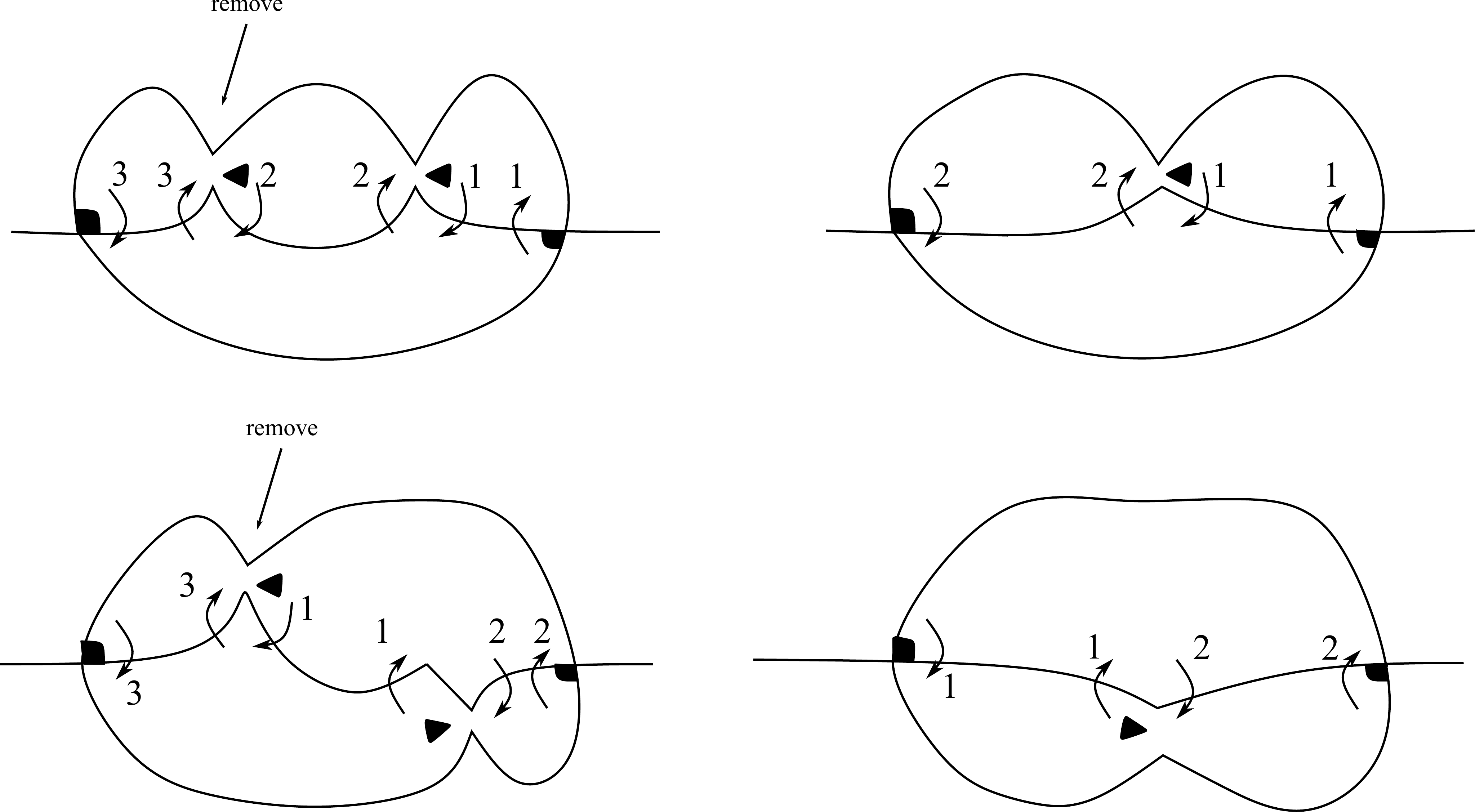}
\caption{}
\label{fig:cuspremoval}
\end{figure}


Assume now that the mid-line trail has self-interactions.
A self-interaction  on the mid-line is said to be \textit{internal} in the sense that it does not share any edge that is adjacent to regions that the exchange markers lie, otherwise it is \textit{of boundary type}.  Assume first that the mid-line trails have only boundary type self-intersections. Notice that the mid-line trail in this case consists of only curls and arborescent like riders of circular form, possibly with cusp interaction with top or bottom parts. 


One can see easily that none of the markers in the riders on the mid-line part turns during the clock exchange (in the factorization of clock moves) of the trails. That is, the riders are \textit{uninvolved}. The remaining markers that are indeed the exchange site markers turn 90 degrees in the clockwise direction. So the hypothesis applies directly. Assume that the hypothesis applies for every universe with the number of vertices less than or equal to $n$. Let $U$ be a universe with $n+1$ vertices whose mid-line consists of only boundary type self-interactions, and the riders on the mid-line are all uninvolved. Removing one of the self-interactions clearly induces a universe with $n$ vertices, whose mid-line part consists of all uninvolved atoms. In fact removing a boundary-type interaction gives a curl. Thus the induction hypothesis applies for the smaller universe and the factorization of this universe extends to a factorization of $U$.


Suppose now, that we have a trail whose mid-line still consists of only boundary type self-interactions (curls or circular riders) but a number of riders on the mid-line are involved (i.e. they interact with top or bottom part at a cusp). Choose a self-interaction site $s$ of the mid-line that lies on an involved rider and remove it.  If $s$ lies on a curl then the removal of $s$ makes the adjacent  rider  involved. The remaining markers on the small universe turn as the induction hypothesis indicates. When we plug $s$ in, these rotations induce a $360$ degrees turn of the marker at $s$. If $s$ lies on a circular rider that is involved, the removal of $s$ induces a curl that is involved. The induction applies to the smaller universe obtained by the removal of $s$ and plugging $s$ back makes $s$ rotate by 360 degrees in the clockwise direction.

Now assume that, the mid-line part has a number of internal self-interactions. Let $s$ be one of them. If $s$ lies on an uninvolved rider then the argument follows similarly as above upon the removal of $s$.
Suppose $s$ lies on an involved rider. If the adjacent crossings to the site $s$ remain involved when $s$ is removed, then by induction these markers turn as the hypothesis assume. When we plug in $s$, these rotations induce a $360$ degrees rotation of the marker at $s$. See Figure \ref{fig:involved}. 
\begin{figure}[H]
\centering
\includegraphics[width=.7\textwidth]{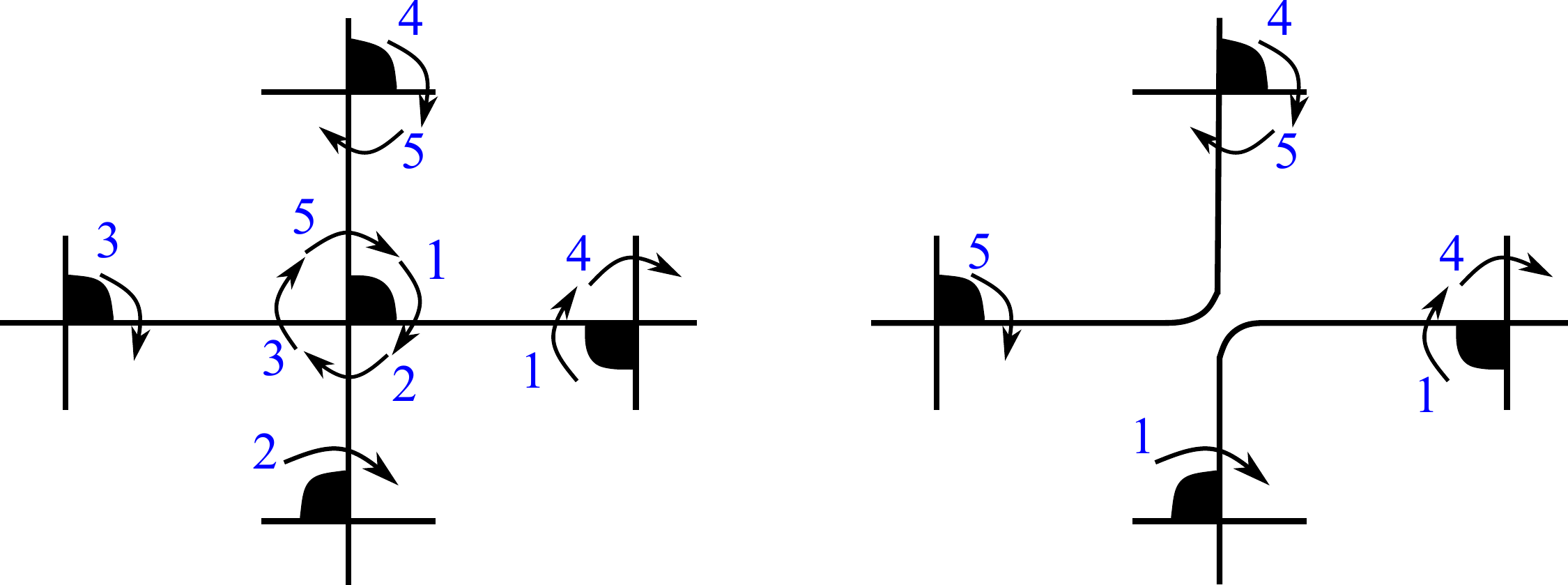}
\caption{}
\label{fig:involved}
\end{figure}

Note that Figure \ref{fig:involved}  illustrates a generic case where the removed site $s$ is adjacent to four vertices with markers. Other cases where the removed site is adjacent to two or three marked sites are shown in Figure \ref{fig:cases1} and Figure \ref{fig:cases2}.

\begin{figure}[H]
\centering
\includegraphics[width=.4\textwidth]{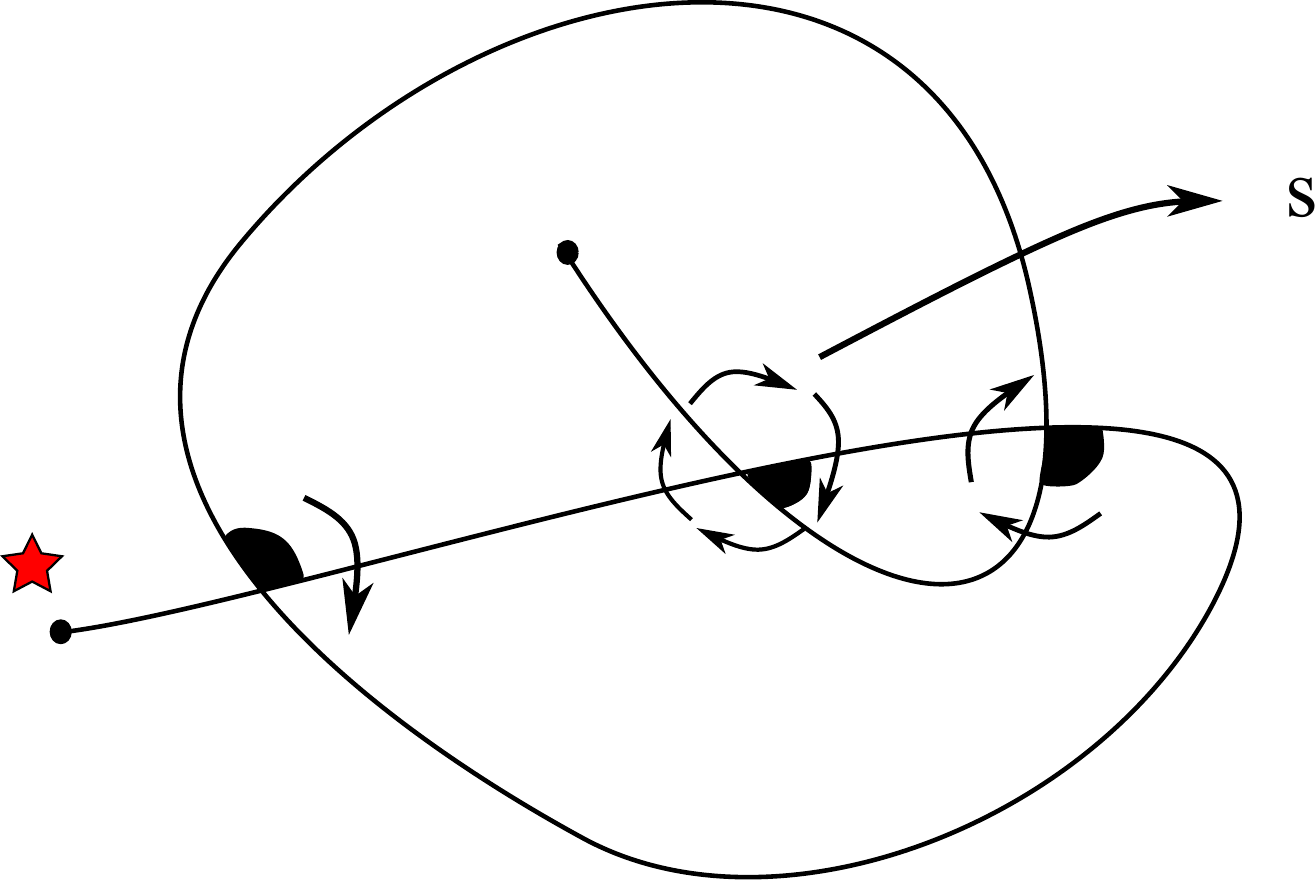}
\caption{}
\label{fig:cases1}
\end{figure}

\begin{figure}[H]
\centering
\includegraphics[width=.25\textwidth]{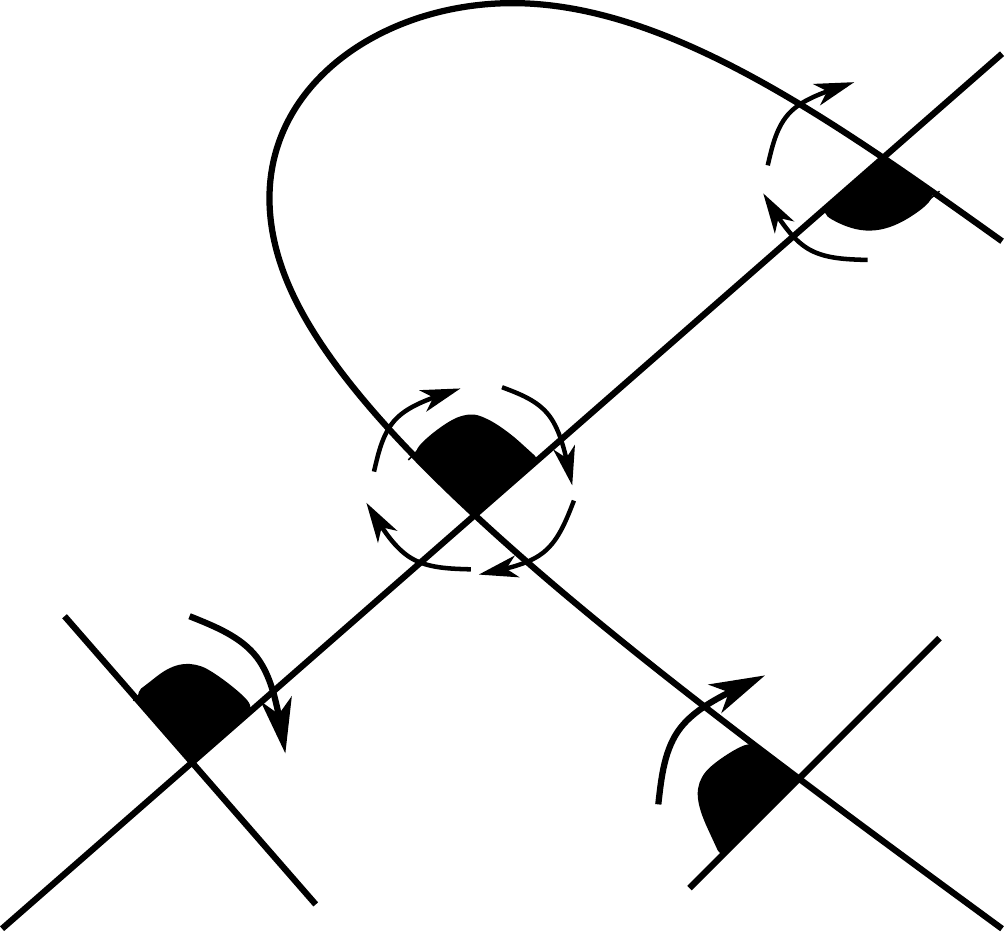}
\caption{}
\label{fig:cases2}
\end{figure}

 The self-interaction site $s$ of the mid-line part may be lying on an involved atom and the removal of $s$ may cause one or both of the segments from the site to belong to uninvolved atoms in $U$.  Figure \ref{fig:last} illustrates a generic picture for this.
 
\begin{figure}[H]
\centering
\includegraphics[width=1\textwidth]{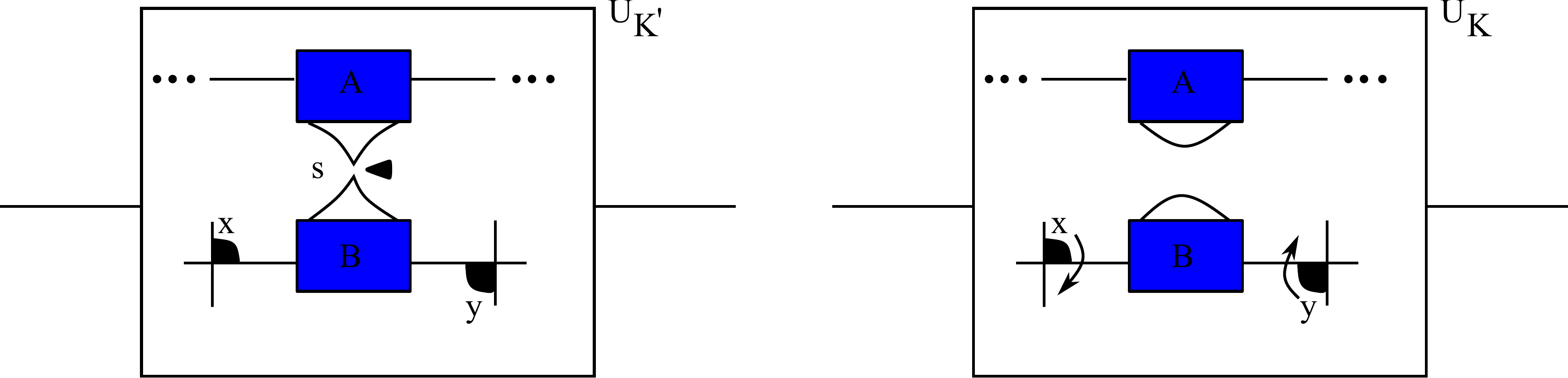}
\caption{}
\label{fig:last}
\end{figure}
 We see in this picture that the site $s$ connects two disjoint parts, denoted by $A$ and $B$, where $A$ and $B$ are riders of the mid-line, included in a bigger rider named as $C$ which we assume to be involved. Here, the atom $B$ is adjacent to two sites in $C$, whose markers get utilized in the transposition. We name these markers as $x$ and $y$.  After the removal of $s$, the atom $C$ is still involved, and thus, the transposition of $x$ and $y$ also takes place in the universe $U_{K}$. On the right-hand side of the figure, it is shown that the atom $B$ gets uninvolved, that is, no marker inside $B$ turns while neighboring markers, for instance $x$ and $y$ turn. Plugging $s$ back forces the marker at $s$ to turn $180$ degrees in the clockwise direction by the argument discussed above.  See also Figure \ref{fig:case5} for an illustration, where $A$ is assumed to be the trivial strand. The same argument also applies for the atom $A$. Therefore the marker at $s$ turns a total of $360$ degrees.

 \begin{figure}[H]
\centering
\includegraphics[width=.4\textwidth]{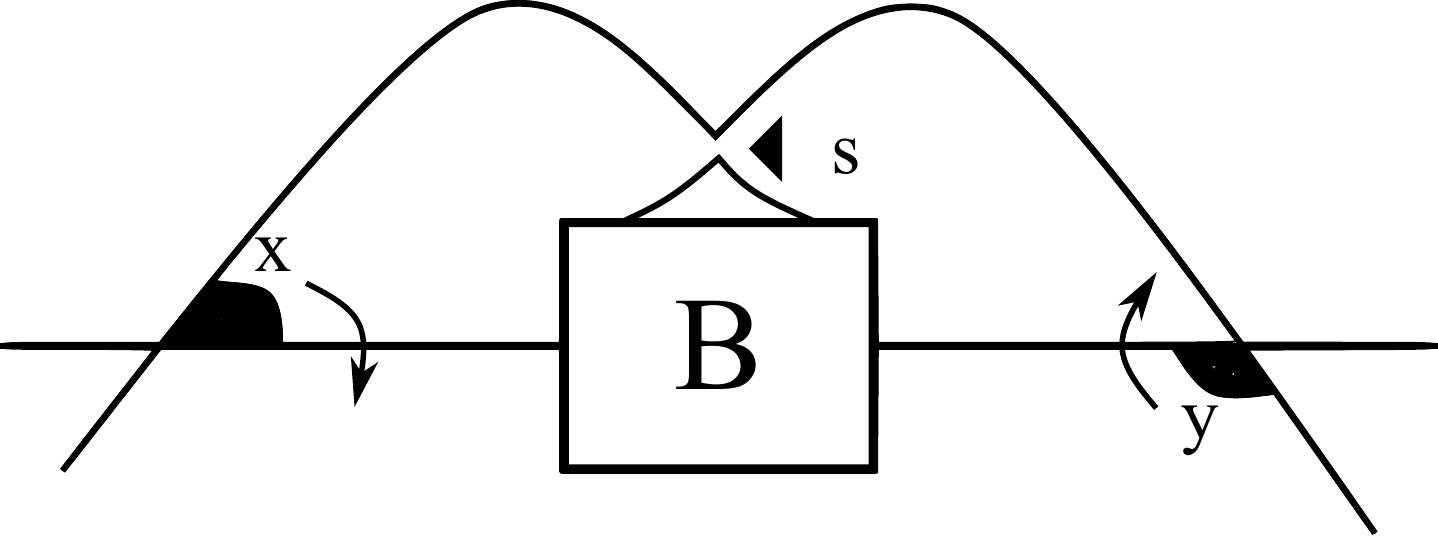}
\caption{}
\label{fig:case5}
\end{figure}

\end{proof}

\

\begin{corollary}

Any state of $U_K$ can be reached from the clocked state by a sequence of clock moves.

\end{corollary}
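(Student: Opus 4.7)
The plan is to chain together three results already established in the paper: the state--trail bijection (Corollary \ref{cor:statesandtrails}), the fact that any two trails are related by finitely many exchanges (Proposition \ref{prop:trailsrel}), and the Extended Clock Theorem (Theorem \ref{prop:clockprop}), which translates each trail exchange into a sequence of clock moves on the corresponding states.

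First, I would fix an arbitrary state $s$ of $U_K$ and let $s_c$ denote the clocked state, whose existence and uniqueness are guaranteed by Theorems \ref{thm:existence} and \ref{thm:unique}. Under Corollary \ref{cor:statesandtrails}, $s$ and $s_c$ correspond to trails $T_s$ and $T_c$ of $U_K$. By Proposition \ref{prop:trailsrel}, there is a finite sequence of trails
\[
T_c = T_0, T_1, \ldots, T_n = T_s,
\]
in which each consecutive pair $T_i, T_{i+1}$ differs by a single or double exchange. Transporting this sequence through the state--trail bijection yields intermediate states $s_c = s_0, s_1, \ldots, s_n = s$, with each consecutive pair related by one such exchange.

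Finally, I would apply the Extended Clock Theorem (Theorem \ref{prop:clockprop}) to each step $s_i \to s_{i+1}$: the corresponding single or double trail exchange is realized by a finite sequence of clock moves on states, all sharing a single rotational sense matching the sense of the exchange. Concatenating these sub-sequences across $i = 0, \ldots, n-1$ produces the desired finite sequence of clock moves carrying $s_c$ to $s$. The substantive work is already contained in the Extended Clock Theorem; the corollary itself amounts to bookkeeping. The only subtlety worth flagging is that the statement demands merely the existence of \emph{some} sequence of clock moves, so we are free to concatenate clockwise and counterclockwise sub-sequences as the exchange directions coming from Proposition \ref{prop:trailsrel} dictate, with no need to produce a monotone chain in either rotational sense.
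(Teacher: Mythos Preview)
Your argument is correct and is precisely the intended one: the paper states this corollary immediately after the Extended Clock Theorem without a separate proof, since it follows by chaining the state--trail bijection, Proposition~\ref{prop:trailsrel}, and Theorem~\ref{prop:clockprop} exactly as you describe. Your closing remark that monotonicity in a single rotational sense is not required is apt and worth keeping.
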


\begin{definition}\normalfont
 Let $s_1$, $s_2$ be two clock states. We define the \textit{supremum} of $s_1$ and $s_2$ to be the intersection of the clockwise moves applied to obtain $s_1$ and $s_2$ from the clocked state. We define the \textit{infimum} of $s_1$ and $s_2$ to be the union of clockwise moves applied to obtain $s_1$ and $s_2$ from the clocked state. 

We define a partial relation $<$ on the set of all states of a 1-linkoid universe as follows. $s_{2} <  s_{1}$ whenever $s_{2}$ can be reached from $s_{1}$ by a clock move in the clockwise direction.  
\end{definition}

\begin{corollary}
Endowed with the partial relation given above and with the infimum/supremum property,  states of a 1-linkoid diagram form a  lattice.

\end{corollary}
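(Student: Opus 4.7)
The plan is to verify the lattice axioms for $(\mathcal{S},<)$ directly, using the structural results developed earlier in the section. First I would check that $<$ is a genuine partial order: reflexivity comes from the empty sequence of clock moves, transitivity is immediate by concatenation of clockwise sequences, and antisymmetry follows from the absence of nontrivial purely clockwise cycles in the state graph. The latter can be established via a height function, for example the minimum number of clockwise clock moves needed to reach $s$ from the unique clocked state $\hat{s}$ of Theorem~\ref{thm:unique}, which by Theorem~\ref{prop:clockprop} is strictly monotone along any clockwise sequence.

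Next, I would associate to each state $s$ the set $M(s)$ of clockwise clock moves appearing in any reduction from $\hat{s}$ to $s$, and argue that $M(s)$ is independent of the chosen reduction. This is where the Extended Clock Theorem does the main work: two clockwise reductions of $\hat{s}$ to the same state can be transformed into one another by commuting adjacent independent clock moves, so the underlying set of moves is an invariant of $s$. The resulting map $s\mapsto M(s)$ is then an order-reversing injection from $(\mathcal{S},<)$ into $(\mathcal{P}(\mathcal{C}),\subseteq)$, where $\mathcal{C}$ denotes the finite set of all clockwise clock moves available on the diagram, and $s' < s$ if and only if $M(s) \subsetneq M(s')$.

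To turn this into the lattice statement, I would establish the diamond (confluence) property: if two distinct clockwise clock moves are both available at a state, then they commute and yield the same state in either order. This is verified by a local case analysis according to how the two move sites interact — disjoint pairs of regions, sharing a single region, or involving the head marker — using the local pictures in Figure~\ref{fig:clock2} together with the rules for marker placement from Section~2. A standard diamond-lemma argument then shows that the image of $M$ is closed under both union and intersection of subsets of $\mathcal{C}$, so the operations $\sup$ and $\inf$ defined in the paper always yield well-defined states.

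The main obstacle is this confluence step, since one must rule out any global obstruction to simultaneously performing an arbitrary consistent subset of commuting clockwise clock moves. Once confluence is settled, the bijection $s\leftrightarrow M(s)$ converts the lattice axioms into the easily-verified set-theoretic identities $M(\sup(s_1,s_2)) = M(s_1)\cap M(s_2)$ and $M(\inf(s_1,s_2)) = M(s_1)\cup M(s_2)$, recovering exactly the paper's definitions; the unique top element $\hat{s}$ (corresponding to $M(\hat{s})=\varnothing$) and the unique bottom counter-clocked state (corresponding to $M=\mathcal{C}$) then appear automatically as the extremes of the lattice, consistent with Theorem~\ref{thm:unique}.
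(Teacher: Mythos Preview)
The paper offers no proof of this corollary at all: it is stated bare, immediately after the definition of $\sup$, $\inf$, and $<$, and is meant to be read as a formal consequence of the preceding Corollary (every state is reachable from the clocked state by clockwise moves) together with the way $\sup$ and $\inf$ were just \emph{defined} as intersection and union of move-sets. Your plan is therefore far more detailed than anything the paper actually does, and in spirit it is the right kind of argument; but two of the steps you rely on do not go through as written.

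First, the antisymmetry argument is circular. You propose the height function $h(s)=$ minimal number of clockwise moves from $\hat s$ to $s$ and assert that Theorem~\ref{prop:clockprop} makes $h$ strictly increase along any clockwise move. It does not: Theorem~\ref{prop:clockprop} only says that a single trail exchange factors into a one-directional sequence of clock moves; it says nothing about minimality or about $h$. If there were a clockwise cycle, $h$ would fail to increase along it, so invoking $h$ to rule out cycles assumes what you are trying to prove. A non-circular version needs an \emph{independent} potential (e.g.\ a signed sum of marker angles measured in $\mathbb Z$, not $\mathbb Z/4$), and you would have to check that such a potential is actually well defined in the linkoid setting.

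Second, and more seriously, the encoding $s\mapsto M(s)\subseteq\mathcal C$ as a \emph{subset} of a fixed finite set of clockwise moves is inconsistent with Theorem~\ref{prop:clockprop} itself. That theorem explicitly records that, in the factorisation of a single exchange, certain markers rotate a total of $180^\circ$ or $360^\circ$; a $360^\circ$ rotation means the same crossing is involved in four clock moves and returns to its original local position. Hence along a clockwise reduction the ``same'' move site can be used repeatedly, $M(s)$ is at best a multiset (an element of $\mathbb Z_{\ge 0}^{\mathcal C}$, not $\{0,1\}^{\mathcal C}$), and your diamond/closure-under-$\cup,\cap$ argument collapses. The paper's own definition of $\sup$ and $\inf$ glosses over exactly this point, so the corollary as stated is really a claim rather than a proved statement; but if you want to supply a proof, you must either replace $M(s)$ by an integer vector and prove a confluence statement at that level, or bypass $M$ entirely and argue via the bijection with maximal trees of the dual graph (Corollary after Corollary~\ref{cor:statesandtrails}), where lattice structure can be read off from known results on spanning-tree lattices.
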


\section{Discussion}

In the case of classical knots and links the clock theorem is essential for proving that the state summation for the Alexander-Conway polynomial gives the same results as Alexander’s original definition using a determinant of a matrix associated with the knot or link diagram. In our generalizations to Mock Alexander Polynomials there are only the state summation definitions and reformulations as permanents. Thus having the clock theorem for knotoids has a different character. It is still the case that the clock theorem provides a structure for the collection of states of the knotoid diagram that is significant for the polynomial invariants that we define from it. The most basic aspect of this relationship is that one can obtain all the states (by marker turnes) from the clockwise or from the counterclockwise state of a given knotoid diagram. Thus the clock theorem provides a specific way to enumerate all the states of the state summation.

The clock states for classical knots and links have been used by Ozsvath and Szabo \cite{Ozsvath} as a basis for the chain complex for Heegard Floer Homology.  In fact purely combinatorial models \cite{Levine, KrizKriz, Ozsvath} for this link homology have been constructed for arbitrary link diagrams using the clock states. It is our intention to generalize these results to knotoids and to use the Clock Theorem for knotoids in this process.


\begin{thebibliography}{30}

\bibitem{A} Adams C, Romrell Z, Bonat A, et al. Generalised knotoids. Mathematical Proceedings of the Cambridge Philosophical Society. 2024;177(1):67-102. doi:10.1017/S0305004124000148

\bibitem{Alex} Alexander J. W. (1928). Topological invariants of knots and links. Transactions of the American Mathematical Society. 30 (2): 275–306.  

\bibitem{Levine} Baldwin J., Levine A. S., A combinatorial spanning tree model for knot Floer homology ,
Advances in Mathematics, Volume 231, Issues 3–4, October–November 2012, Pages 1886-1939

\bibitem{Bat} Bataineh K.,  Hajij M.  Jones Polynomial for links in the handlebody. The Rocky Mountain Journal of Mathematics, Vol. 43, No. 3 (2013), pp. 737-753 (17 pages)

\bibitem{Dye} Dye H.A., Kauffman L.H. Minimal surface representations of virtual knots and links.  Algebr. Geom. Topol. 5 (2009) 509–535.

\bibitem{Fox1}Fox R.H. A quick trip through knot theory. In “Topology of 3-Manifolds”, ed. by M. K. Fort Jr., Prentice Hall (1962), 120-167.

\bibitem{Fox2} Crowell R.H , Fox R.H. Introduction to Knot Theory, Blaisdell Pub. Co. (1963).


\bibitem{GK1} G\"ug\"umc\"u N., Kauffman L. New invariants of knotoids, European Journal of Combinatorics, Volume 65, October 2017, Pages 186-229.

\bibitem{GG} Gabrovšek, B., G\"ug\"umc\"u, New Invariants of multi-linkoids. Mediterr. J. Math. 20, 165 (2023). https://doi.org/10.1007/s00009-023-02370-w

\bibitem{MAP1}G\"ug\"umc\"u N., Kauffman, Mock Alexander Polynomials, to appear in Electronic Journal of Combinatorics.

\bibitem{MAP2} G\"ug\"umc\"u N., Kauffman L.,  Monaghan J. E., Moltmaker W. Under review.

\bibitem{Jaeger} Jaeger F., Welsh D.,Vertigan D., On the Computational Complexity of the Jones and Tutte
polynomials, Proceedings of the Cambridge Philosophical Society 108 (1990), 5-53.

\bibitem{FKT} Kauffman L.H. Formal Knot Theory. Princeton University Press, Lecture Notes Series 30 (1983).

\bibitem{OnKnots} Kauffman L.H. On Knots. (AM-115), Volume 115.  Princeton University Press  Annals of Mathematics Studies (1988)

\bibitem{KrizKriz} Kriz D., Kriz I. A spanning tree cohomology theory for links, Advances in Mathematics Volume 255, 1 April 2014, Pages 414-454.

\bibitem{Lando} Lando S, Zvonkin K, Alexander K. (2004), Graphs on Surfaces and their Applications. Encyclopaedia of Mathematical Sciences (EMS, volume 141).

\bibitem{Ozsvath} Ozvath P., Szabo Z. Kauffman States, Bordered Algebras, and a Bigraded Knot Invariant. Advances in Mathematics, Volume 328, 13 April 2018, Pages 1088-1198.

\bibitem{BKP} Barkataki K, Kauffman L.H., Panagiotou E.
The virtual spectrum of linkoids and open curves in 3-spaceJournal of Knot Theory and Its RamificationsVol. 33, No. 03, 2450006 (2024)

\bibitem{Rolfsen} Rolfsen D. Knots and Links.  Publish or Perish Press (1987).

\bibitem{Turaev} Turaev V. Knotoids. Osaka J of Math. 49(1): 195-223 (March 2012).

\end{thebibliography}
\end{document}